\newcommand{\NN}{\mathbb N}
\newcommand{\CC}{\mathbb C}
\newcommand{\RR}{\mathbb R}
\newcommand{\ZZ}{\mathbb Z}
\newcommand{\SSS}{\mathcal S}
\newcommand{\ad}{\operatorname{ad}}
\newtheorem{theorem}{Theorem}[section]
\newtheorem{lemma}[theorem]{Lemma}
\theoremstyle{remark}
\newtheorem{remark}[theorem]{Remark}
\theoremstyle{definition}
\newtheorem{definition}[theorem]{Definition}
\numberwithin{equation}{section}
\newcommand{\beq}{\begin{eqnarray}}
\newcommand{\eeq}{\end{eqnarray}}
\newcommand{\beqs}{\begin{eqnarray*}}
\newcommand{\eeqs}{\end{eqnarray*}}
\begin{document}

\author[S. Pilipovi\' c]{Stevan Pilipovi\' c}
\address{Department of Mathematics and Informatics,
University of Novi Sad, Trg Dositeja Obradovi\'{c}a 4, 21000 Novi Sad, Serbia}
\email{stevan.pilipovic@dmi.uns.ac.rs}

\author[B. Prangoski]{Bojan Prangoski}
\thanks{S. Pilipovi\' c is supported by the project 174024 of the MPNTR of Serbia  while the work of B. Prangoski was partially supported by the bilateral project ``Microlocal analysis and applications'' between the Macedonian and Serbian academies of sciences and arts.}
\address{Department of Mathematics, Faculty of Mechanical
Engineering-Skopje, University ``Ss. Cyril and Methodius", Karposh 2 b.b., 1000 Skopje, Macedonia}
\email{bprangoski@yahoo.com}

\title[Ellipticity and Fredholmness in the Weyl-H\"ormander calculus]{Equivalence of Ellipticity and Fredholmness in the Weyl-H\"ormander calculus}

\keywords{H\"ormander metric, geodesic temperance, Sobolev spaces $H(M,g)$}

\subjclass[2010]{35S05 46E35 47G30 58G15}

\frenchspacing
\begin{abstract}
The main result is that the Fredholm property of a $\Psi$DO acting on Sobolev spaces in the Weyl-H\"ormander calculus and the ellipticity are equivalent for geodesically temperate H\"ormanders metrics whose associated Planck's functions vanish at infinity. Additionally, we prove that when the H\"ormander metric is geodesically temperate, and consequently the calculus is spectrally invariant, the inverse $\lambda\mapsto b_\lambda\in S(1,g)$ of every $\mathcal{C}^N$, $0\leq N\leq \infty$, map $\lambda\mapsto a_\lambda\in S(1,g)$ comprised of invertible elements on $L^2$ is again of class $\mathcal{C}^N$.
\end{abstract}
\maketitle

\section{Introduction}

The question of spectral invariance is of a significant importance in the theory of pseudodifferential operators. Recall that a pseudodifferential calculus is said to be spectrally invariant if for every $\Psi$DO with $0$ order symbol (consequently, continuous on $L^2$) which is invertible on $L^2$ its inverse is again a $\Psi$DO with a $0$ order symbol. This property has been proved by several authors for various global (and local) calculi including the Shubin calculus, the SG (scattering) calculus, the Beals-Fefferman calculus, e.t.c. (see \cite{beals3,CoifmanMeyer,cordes1,leo-sch,leo-sch1,Ueberberg}). In their seminal paper \cite{bon-che}, Bony and Chemin (see also \cite{bon-ler}) generalised these results by proving the spectral invariance for the Weyl-H\"ormander calculus \cite{hormander,horm3} when the H\"ormander metric satisfies the so-called geodesic temperance (see \cite{bon-che,lernerB}). In the first part of this article (Section \ref{vjtknc159}) we slightly improve two lemmas of \cite{bon-che,lernerB}, by, essentially, repeating the arguments employed there, but changing the right hand sides of the estimates in these results. Subsequently, we avail ourselves of these results to prove the following fact which sheds more light on the spectral invariance of the Weyl-H\"ormander calculus: the process of taking inverses in $S(1,g)$ preserves the regularity. To be more precise, if $\lambda\mapsto a_{\lambda}$ is a $\mathcal{C}^N$, $0\leq N\leq\infty$, mapping with values in $S(1,g)$ such that $a_{\lambda}^w$ is invertible on $L^2$, then the mapping $\lambda\mapsto b_{\lambda}$, where $b_{\lambda}^w$ is the inverse of $a_{\lambda}^w$, is also of class $\mathcal{C}^N$; in fact, we prove this result for matrix valued symbols. In the second part of the article (Section \ref{vnklst135}), we investigate the Fredholm properties of $\Psi$DOs with symbols in the Weyl-H\"ormander classes when acting between the Sobolev spaces naturally associated to them. The main result is that the Fredholm property of a $\Psi$DO can be characterised by the ellipticity of the symbol, that is a $\Psi$DO is Fredholm operator between appropriate Sobolev spaces if and only if its symbol is elliptic (see \cite{bog-sch,schrohe,leo-sch1,leo-sch} for similar type of results concerning special instances of the Weyl-H\"ormander calculus). This result heavily relies on the vanishing at infinity of the Planck function associated to the H\"ormander metric as well as on the main result of Section \ref{vjtknc159} which, in turn, depends on the spectral invariance and the geodesic temperance of the metric.

\section{Preliminaries}

Let $V$ be an $n$ dimensional real vector space with $V'$ being its dual. The $2n$-dimensional vector space is $W=V\times V'$ is symplectic with the symplectic form $[(x,\xi),(y,\eta)]=\langle \xi,y\rangle-\langle \eta,x\rangle$. We will always denote the points in $W$ with capital letters $X,Y,Z,\ldots$. Let $X\mapsto g_X$ be a Borel measurable symmetric covariant $2$-tensor field on $W$ that is positive definite at every point. We will always denote the corresponding positive definite quadratic form at $X\in W$ by the same symbol $g_X$, i.e. $g_X(T)=g_X(T,T)$, $T\in T_XW$. Denoting by $Q_X$ the corresponding linear map $W\rightarrow W'$ and by $\sigma:W\rightarrow W'$ the linear map induced by the symplectic form, one defines the symplectic dual of $Q_X$ by $Q^{\sigma}_X=\sigma^*Q_X^{-1}\sigma$. The corresponding symmetric covariant $2$-tensor field $X\mapsto g^{\sigma}_X$ is again Borel measurable and positive definite at every point; it can be given by $g^{\sigma}_X(T)=\sup_{S\in W\backslash\{0\}} [T,S]^2/g_X(S)$. We say that $X\mapsto g_X$ is a H\"ormander metric if the following three conditions are satisfied:
\begin{itemize}
\item[$(i)$] (slow variation) there exist $C\geq 1$ and $r>0$ such that for all $X,Y,T\in W$
    \beqs
    g_X(X-Y)\leq r^2\Rightarrow C^{-1}g_Y(T)\leq g_X(T)\leq Cg_Y(T);
    \eeqs
\item[$(ii)$] (temperance) there exist $C\geq 1$, $N\in \NN$ such that for all $X,Y,T\in W$
    \beqs
    \left(g_X(T)/g_Y(T)\right)^{\pm 1}\leq C(1+g^{\sigma}_X(X-Y))^N;
    \eeqs
\item[$(iii)$] (the uncertainty principle) $g_X(T)\leq g^{\sigma}_X(T)$, for all $X,T\in W$.
\end{itemize}
We call $C$, $r$ and $N$ the structure constants of $g$. We say that $g$ is symplectic if $g=g^{\sigma}$. Denote $\lambda_g(X)=\inf_{T\in W\backslash\{0\}} (g^{\sigma}_X(T)/g_X(T))^{1/2}$; it is Borel measurable and $\lambda_g(X)\geq 1$, $\forall X\in W$. Given $Y\in W$ and $r>0$, denote $U_{Y,r}=\{X\in W|\, g_Y(X-Y)\leq r^2\}$ and define $\delta_r(X,Y)=1+g^{\sigma}_X\wedge g^{\sigma}_Y(U_{X,r}-U_{Y,r})$, $X,Y\in W$; where $g^{\sigma}_X\wedge g^{\sigma}_Y$ denotes the harmonic mean of the positive-definite quadratic forms $g^{\sigma}_X$ and $g^{\sigma}_Y$. The function $(X,Y)\mapsto \delta_r(X,Y)$ is Borel measurable on $W\times W$ and when $r\leq r'$ where $r'$ depends only on the structure constants of $g$, the function $\delta_r$ enjoys very useful properties; see \cite[Section 2.2.6]{lernerB} for the complete account.\\
A positive Borel measurable function $M$ on $W$ is said to be $g$-admissible if there are $C\geq 1$, $r>0$ and $N\in\NN$ such that for all $X,Y\in W$
\begin{gather*}
g_X(X-Y)\leq r^2\Rightarrow C^{-1}M(Y)\leq M(X)\leq CM(Y);\\
\left(M(X)/M(Y)\right)^{\pm1}\leq C(1+g^{\sigma}_X(X-Y))^N.
\end{gather*}
We denote by $g^{\#}_X$ the geometric mean of $g_X$ and $g^{\sigma}_X$: $g^{\#}_X=\sqrt{g_X\cdot g^{\sigma}_X}=\sqrt{g^{\sigma}_X\cdot g_X}$ (cf. \cite[Definition 4.4.26, p. 341]{lernerB}). Then $X\mapsto g^{\#}_X$ is a symplectic H\"ormander metric, called the symplectic intermediate of $g$, and every $g$-admissible weight is also $g^{\#}$-admissible (see \cite{toft} and \cite[Proposition 2.2.20, p. 78]{lernerB}); furthermore $g_X\leq g^{\#}_X\leq g^{\sigma}_X$.\\
\indent Given a $g$-admissible weight $M$, the space of symbols $S(M,g)$ is defined as the space of all $a\in\mathcal{C}^{\infty}(W)$ for which
\beq\label{cnbvlj135}
\|a\|^{(k)}_{S(M,g)}=\sup_{l\leq k}\sup_{\substack{X\in W\\ T_1,\ldots, T_l\in W\backslash\{0\}}}\frac{|a^{(l)}(X;T_1,\ldots,T_l)|} {M(X)\prod_{j=1}^lg_X(T_j)^{1/2}}<\infty,\,\,\, \forall k\in\NN.
\eeq
With this system of seminorms, $S(M,g)$ becomes an $(F)$-space. One can always regularise the metric making it to be smooth (hence Riemannian) without changing the notion of $g$-admissibility of a weight and the space $S(M,g)$; furthermore the same can be done for any $g$-admissible weight (see \cite{hormander}, \cite[Remark 2.2.8, p. 71]{lernerB}). In fact, given any $g$-admissible weight $M$, there exists a smooth $g$-admissible weight $\tilde{M}\in S(M,g)$ and $C>0$ such that $M(X)\leq C\tilde{M}(X)$, $\forall X\in W$. The definition of $S(M,g)$ can be naturally extended to matrix valued symbols. Namely, let $\tilde{V}$ be a finite dimensional complex Banach space (from now on, always abbreviated as $(B)$-space) with norm $\|\cdot\|_{\tilde{V}}$ and denote by $\|\cdot \|_{\mathcal{L}_b(\tilde{V})}$ the induced norm on $\mathcal{L}_b(\tilde{V})$. One defines the space of $\mathcal{L}_b(\tilde{V})$-valued symbols $S(M,g;\mathcal{L}_b(\tilde{V}))$ as the space of all $a\in\mathcal{C}^{\infty}(W;\mathcal{L}_b(\tilde{V}))$ for which $\|a\|^{(k)}_{S(M,g;\mathcal{L}_b(\tilde{V}))}<\infty$ where the latter norms are defined as in (\ref{cnbvlj135}) with $\|a^{(l)}(X;T_1,\ldots,T_l)\|_{\mathcal{L}_b(\tilde{V})}$ in place of $|a^{(l)}(X;T_1,\ldots,T_l)|$. Then $S(M,g;\mathcal{L}_b(\tilde{V}))=S(M,g)\otimes \mathcal{L}_b(\tilde{V})$ is an $(F)$-space (the topology on the tensor product is $\pi=\epsilon$ since $\mathcal{L}_b(\tilde{V})$ is finite dimensional).\\
\indent For any $a\in\SSS(W)$ (or $a\in\SSS(W;\mathcal{L}_b(\tilde{V}))$), the Weyl quantisation $a^w$ is the operator
\beqs
a^w\varphi(x)=\frac{1}{(2\pi)^n}\int_{V'}\int_V e^{i\langle x-y,\xi\rangle}a((x+y)/2,\xi)\varphi(y)dyd\xi,\,\,\, \varphi\in \SSS(V)\,\, (\mbox{resp.}\, \varphi\in\SSS(V;\tilde{V})),
\eeqs
where $dy$ is a left-right Haar measure on $V$ with $d\xi$ being its dual measure defined on $V'$ so that the Fourier inversion formula holds with the standard constants (consequently, $a^w$ as well as the product measure $dyd\xi$ on $W$ are unambiguously defined); $a^w$ extends to a continuous operator from $\SSS'(V)$ into $\SSS(V)$ (resp. from $\SSS'(V;\tilde{V})=\SSS'(V)\otimes \tilde{V}$ into $\SSS(V;\tilde{V})=\SSS(V)\otimes \tilde{V}$; the topology on the tensor product is $\pi=\epsilon$). The definition of the Weyl quantisation extends to symbols in $\SSS'(W)$ (resp. $\SSS'(W;\mathcal{L}_b(\tilde{V}))$) and in this case $a^w:\SSS(V)\rightarrow \SSS'(V)$ (resp. $a^w:\SSS(V;\tilde{V})\rightarrow \SSS'(V;\tilde{V})$) is continuous. When $a\in S(M,g)$ (resp. $a\in S(M,g;\tilde{L}_b(\tilde{V}))$), for $g$-admissible weight $M$, $a^w$ is in fact continuous as operator on $\SSS(V)$ (resp. $\SSS(V;\tilde{V})$) and it uniquely extends to an operator on $\SSS'(V)$ (resp, $\SSS'(V;\tilde{V})$) (cf. \cite{hormander}). Furthermore, if $a,b\in \SSS(W)$ (resp. $a,b\in\SSS(W;\mathcal{L}_b(\tilde{V}))$), then $a^wb^w=(a\#b)^w$, where $a\# b\in \SSS(W)$ (resp. $a\#b\in \SSS(W;\mathcal{L}_b(\tilde{V}))$) is given by
\beqs
a\#b(X)=\frac{1}{\pi^{2n}}\int_{W\times W} e^{-2i[X-Y_1,X-Y_2]}a(Y_1)b(Y_2)dY_1dY_2.
\eeqs
The bilinear map $\#$ extends uniquely to a weakly continuous bilinear map $S(M_1,g)\times S(M_2,g)\rightarrow S(M_1M_2,g)$ (in the sense of \cite[Theorem 4.2]{hormander}) and it is also continuous when these spaces are equipped with the $(F)$-topologies described above. This holds equally well in the $\mathcal{L}_b(\tilde{V})$-valued case (see \cite{hormander}).\\
Given $Y\in W$ and $r>0$, denote $U_{Y,r}=\{X\in W|\, g_Y(X-Y)\leq r^2\}$. We say that $a\in\mathcal{C}^{\infty}(W)$ is $g_Y$-confined in $U_{Y,r}$ (see \cite{bon-che,lernerB}) if
\beqs
\|a\|^{(k)}_{g_Y,U_{Y,r}}=\sup_{l\leq k} \sup_{\substack{X\in W\\ T_1,\ldots,T_l\in W\backslash\{0\}}} \frac{|a^{(l)}(X;T_1,\ldots,T_l)| (1+g^{\sigma}_Y(X-U_{Y,r}))^{k/2}}{\prod_{j=1}^l g_X(T_j)^{1/2}}<\infty,\,\,\, \forall k\in\NN.
\eeqs
We will use the same notations even when $a$ is $\mathcal{L}_b(\tilde{V})$-valued (of course, instead of the absolute value one uses $\|\cdot\|_{\mathcal{L}_b(\tilde{V})}$ in the above definition); from the context, it will always be clear whether we are considering scalar or $\mathcal{L}_b(\tilde{V})$-valued symbols. For fixed $Y$ and $r$, the set of $g_Y$-confined symbols in $U_{Y,r}$ coincides with $\SSS(W)$ (resp. with $\SSS(W;\mathcal{L}_b(\tilde{V}))$). A family $\SSS(W)\ni \varphi_Y$, $Y\in W$, (resp. $\SSS(W;\mathcal{L}_b(\tilde{V}))\ni \varphi_Y$, $Y\in W$) is said to be uniformly $g_Y$-confined in $U_{Y,r}$ if $\sup_{Y\in W} \|\varphi_Y\|^{(k)}_{g_Y,U_{Y,r}}<\infty$, $\forall k\in\NN$. There is $r_0>0$ which depends only on the structure constants of $g$ such that for each $r\leq r_0$ there is a smooth uniformly $g_Y$-confined family in $U_{Y,r}$ $Y\mapsto \varphi_Y$, $W\rightarrow \SSS(W)$, such that $\operatorname{supp}\varphi_Y\subseteq U_{Y,r}$, $\varphi_Y\geq 0$ and
\beq\label{vstnlp135}
\int_W \varphi_Y(X)|g_Y|^{1/2}dY=1,\,\,\, \forall X\in W,
\eeq
where $|g_Y|=\det g_Y$ (see \cite[Theorem 2.2.7, p. 70]{lernerB}). Given $a_j\in S(M_j,g)$ (resp. $a_j\in S(M_j,g;\mathcal{L}_b(\tilde{V}))$), $j=1,2$, and denoting $a_{j,Y}=a_j\varphi_Y$, $Y\in W$, it holds
\beqs
a_1\#a_2(X)=\int_{W\times W} a_{1,Y_1}\#a_{2,Y_2}(X)|g_{Y_1}|^{1/2} |g_{Y_2}|^{1/2}dY_1dY_2,\,\,\,\forall X\in W
\eeqs
(cf. the proof of \cite[Theorem 2.3.7, p. 91]{lernerB}). Furthermore, given $a\in S(M,g)$ (resp. $a\in S(M,g;\mathcal{L}_b(\tilde{V}))$), and denoting as before $a_Y=a\varphi_Y$ we have
\beqs
a^wu=\int_W a_Y^wu |g_Y|^{1/2}dY,\,\, u\in\SSS(V)\,\, (\mbox{resp.}\, u\in\SSS(V;\tilde{V})),
\eeqs
where the equality holds if we interpret the integral in Bochner sense as well as pointwise. Furthermore, for $\varphi_Y$, $Y\in W$, as above and any $r'>r$ there exist two strongly Borel measurable uniformly $g_Y$-confined families in $U_{Y,r'}$, $Y\mapsto\psi_Y$, $Y\mapsto \theta_Y$, $W\rightarrow \SSS(W)$, such that $\varphi_Y=\psi_Y\#\theta_Y$, $Y\in W$ (see \cite[Theorem 2.3.15, p. 98]{lernerB}). The Sobolev space $H(M,g)$, with a $g$-admissible weight $M$, is the space of all $u\in\SSS'(V)$ such that
\beq\label{vcnjep135}
\int_W M(Y)^2\|\theta_Y^wu\|^2_{L^2(V)}|g_Y|^{1/2}dY<\infty.
\eeq
It is a Hilbert space with inner product
\beq\label{vdklbr135}
(u,v)_{H(M,g)}=\int_W M(Y)^2 (\theta_Y^wu,\theta_Y^wv)_{L^2(V)}|g_Y|^{1/2} dY
\eeq
and its definition and topology do not depend on the choice of the partition of unity $\varphi_Y$, $Y\in W$, and the families $\psi_Y$, $\theta_Y$, $Y\in W$. The space $\SSS(V)$ is continuously and densely included into $H(M,g)$ and the latter is continuously and densely included into $\SSS'(V)$. If $a\in S(M',g)$, $a^w$ restricts to a continuous operator from $H(M,g)$ into $H(M/M',g)$; in particular, if $M_1/M_2$ is bounded from below then $H(M_1,g)$ is continuously (and densely) included into $H(M_2,g)$. Furthermore, $H(1,g)$ is just $L^2(V)$. (We refer to \cite[Section 2.6]{lernerB} and \cite{bon-che} for the proofs of these properties of the Sobolev spaces $H(M,g)$.) The definition of $H(M,g;\tilde{V})$ is similar: $u\in\SSS'(V;\tilde{V})$ is in $H(M,g;\tilde{V})$ if the quantity (\ref{vcnjep135}) is finite with $\|\theta_Y^wu\|_{\mathcal{L}^2(V)}$ replaced by $\|(\theta_YI)^wu\|_{L^2(V;\tilde{V})}$, where $I:\tilde{V}\rightarrow \tilde{V}$ is the identity operator. It is a $(B)$-space since it is topologically isomorphic to $H(M,g)\otimes \tilde{V}$. Fixing an inner product on $\tilde{V}$ naturally induces an inner product on $L^2(V;\tilde{V})$ which, in turn, induces an inner product on $H(M,g;\tilde{V})$ (similarly as in (\ref{vdklbr135})) and the latter becomes a Hilbert space. Moreover, the above isomorphism verifies that all facts we mentioned for the scalar valued case remain true in the vector-valued case as well.\\
\indent For any $A\in\mathcal{L}(\SSS(V),\SSS'(V))$ and any linear form $L$ on $W$, we denote by $\ad L^w\cdot A$ the commutator of $L^w$ and $A$, i.e. $\ad L^w\cdot A=L^wA-AL^w\in\mathcal{L}(\SSS(V),\SSS'(V))$. When $LX=[T,X]$, for some $T\in W$, it will be convenient to identify the linear form $L$ with $T$. If $a\in S(M,g)$, the following seminorms are always finite for all families $\phi_Y$, $Y\in W$, which are uniformly $g_Y$-confined in $U_{Y,r}$
\beqs
\|a^w\|^{(k)}_{op(M,g)}=\sup_{Y\in W} \sup_{\substack{l\leq k\\ g_Y(L_1)\leq 1,\ldots, g_Y(L_l)\leq 1}} M(Y)^{-1}\|\ad L_1^w\ldots \ad L_l^w\cdot \phi_Y^w a^w\|_{\mathcal{L}(L^2)}<\infty,\,\,\, \forall k\in\NN,
\eeqs
where $L_jX=[T_j,X]$ and, as mentioned above, we identified $L_j$ with $T_j$. In fact, a result of Bony and Chemin \cite[Theorem 5.5]{bon-che} (see also \cite[Theorem 2.6.12, p. 145]{lernerB}) proves that the converse is also true. Namely, if $A\in\mathcal{L}(\SSS(V),\SSS'(V))$ is such that for all families $\phi_Y$, $Y\in W$, which are uniformly $g_Y$-confined in $U_{Y,r}$, $\ad L_1^w\ldots \ad L_k^w\cdot \phi_Y^w A\in \mathcal{L}(L^2)$, $\forall Y\in W$, $\forall k\in\NN$, and the seminorms $\|A\|^{(k)}_{op(M,g)}$ are finite for all $k\in\NN$, then $A=a^w$, for some $a\in S(M,g)$. In fact, with $\varphi_Y$, $\psi_Y$, $\theta_Y$, $Y\in W$, as before, one needs to check this only for the uniformly confined family $\theta_Y$, $Y\in W$, and
\beqs
\forall k\in\NN,\, \exists C>0,\, \exists l\in\NN,\,\,\, \|a\|^{(k)}_{S(M,g)}\leq C\|a^w\|^{(l)}_{op(M,g)},
\eeqs
with $\|a^w\|^{(l)}_{op(M,g)}$ defined via $\theta_Y$, $Y\in W$. All of the above hold equally well in the vector-valued case with $\phi_Y^w$ and $\ad L^w$ replaces by $(\phi_YI)^w$ and $\ad (LI)^w$ respectively; in fact, the validity of these results is a direct consequence of the topological isomorphism $S(M,g;\mathcal{L}_b(\tilde{V}))\cong S(M,g)\otimes \mathcal{L}_b(\tilde{V})$.\\
\indent On a couple of occasions we will impose the following additional assumption on $g$; we will always emphasise when we assume it. We say the H\"ormander metric $g$ is geodesically temperate if there exist $C\geq1$ and $N\in\NN$ such that
\beq\label{cntepm135}
g_X(T)\leq Cg_Y(T)(1+d(X,Y))^N,\,\,\, \forall X,Y,T\in W,
\eeq
where $d(\cdot,\cdot)$ stands for the geodesic distance on $W$ induced by $g^{\#}$. A number of metrics which correspond to different calculi are geodesically temperate: the $S^m_{\rho,\delta}$-calculus, the semi-classical, the Shubin calculus (see \cite[Example 7.3]{bon-che}, \cite[Lemmas 2.6.22 and 2.6.23, p. 154]{lernerB}). In fact, \cite[Theorem 5 $(i)$]{bony} proves that if the positive Borel measurable functions $\varphi$ and $\Phi$ on $\RR^{2n}$ are such that
\beqs
g_{x,\xi}=\varphi(x,\xi)^{-2}|dx|^2+\Phi(x,\xi)^{-2}|d\xi|^2
\eeqs
is a H\"ormander metric than $g$ satisfies (\ref{cntepm135}) with $d(\cdot,\cdot)$ standing for the geodesic distance induced by $g^{\sigma}$. Applying this result to $g^{\#}_{x,\xi}=\Phi\varphi^{-1}|dx|^2+\varphi\Phi^{-1}|d\xi|^2$ we conclude the latter is geodesically temperate. As $g=\varphi^{-1}\Phi^{-1}g^{\#}$, \cite[Lemma 2.6.22, p. 154]{lernerB} verifies that $g$ is also geodesically temperate ($\varphi\Phi\geq 1$ since $g$ is a H\"ormander metric). In particular, the geodesic temperance is valid for the Beals-Fefferman calculus \cite{bf1,beals1,beals2} (cf. \cite[Example 3]{hormander}) as well as the Nicola-Rodino calculus \cite{NR}.

\section{Inverse smoothness in $S(1,g;\mathcal{L}_b(\tilde{V}))$}\label{vjtknc159}

The result of Bony and Chemin \cite[Theorem 7.6]{bon-che} (see also \cite[Theorem 2.6.27, p. 158]{lernerB}) verifies that the Weyl-H\"ormander calculus is spectrally invariant provided the H\"ormnader metric $g$ is geodesically temperate. That is, given $a\in S(1,g)$ such that $a^w$ is invertible on $L^2(V)$ its inverse is pseudodifferential operator with symbol in $S(1,g)$ (i.e. the operators with symbols in $S(1,g)$ form a $\Psi^*$-algebra in the $C^*$-algebra $\mathcal{L}_b(L^2(V))$; cf. \cite{gramsch,schrohe}). In this section, we prove that this process of taking inverses preserves the regularity in the following sense. If $\lambda\mapsto a_{\lambda}$ is of class $\mathcal{C}^N$, $0\leq N\leq\infty$, with values in $S(1,g;\mathcal{L}_b(\tilde{V}))$ such that $a_{\lambda}^w$ is invertible in $\mathcal{L}_b(L^2(V;\tilde{V}))$, then the mapping $\lambda\mapsto b_{\lambda}$, where $b_{\lambda}^w$ is the inverse of $a_{\lambda}^w$, is also of class $\mathcal{C}^N$.\\
\indent Before we state and prove this result, we need the following technical results.
They have the same form as  \cite[Lemma 2.6.25, p. 155]{lernerB}, see also \cite[Lemma 2.6.26, p. 156]{lernerB} and Bony and Chemin \cite[Lemma 7.4]{bon-che}  \cite[Lemma 7.5]{bon-che} but the right hand sides of the estimates in our paper have  slightly more precise forms. Moreover, the second lemma is slightly more general variant of the second cited lemma.

\begin{lemma}\label{lemmaonprodsub}
Let $g$ be a H\"ormander metric. Then $\forall N_0\geq0$, $\exists C_0>0$, $\exists k_0\in \NN$, $\forall N_1\geq0$, $\exists C_1>0$, $\exists k_1\in \NN$, $\forall \nu\in\ZZ_+$, $\forall J\subseteq \{0,\ldots,\nu-1\}$, $J\neq \emptyset$, $\forall c_0,\ldots,c_{\nu}\in\SSS(W;\mathcal{L}_b(\tilde{V}))$, $\forall Y_0,\ldots,Y_{\nu}\in W$ it holds
\begin{multline*}
\|c_0^w\ldots c_{\nu}^w\|_{\mathcal{L}_b(L^2(V;\tilde{V}))}\leq C_0^{\nu-|J|}C_1^{1+|J|}\|c_0\|^{(k_1)}_{g_{Y_0},U_{Y_0,r}} \|c_{\nu}\|^{(k_1)}_{g_{Y_{\nu}},U_{Y_{\nu},r}} \left(\max_{j\in K'} \|c_j\|^{(k_0)}_{g_{Y_j},U_{Y_j},r}\right)^{\nu-|J\cup\{0,\nu-1\}|}\\
\cdot\left(\max_{j\in K} \|c_j\|^{(k_1)}_{g_{Y_j},U_{Y_j},r}\right)^{|(J\cup\{0\})\backslash\{\nu-1\}|}\,\, \prod_{j=0}^{\nu-1}\delta_r(Y_j,Y_{j+1})^{-N_0} \prod_{j\in J}\delta_r(Y_j,Y_{j+1})^{-N_1},
\end{multline*}
with $K=(J\cup (J+1))\backslash\{0,\nu\}$ and $K'=(\NN\cap[1,\nu-1])\backslash(J\cap(J+1))$. If $K=\emptyset$ then $|(J\cup\{0\})\backslash\{\nu-1\}|=0$ and we define
\beqs
\left(\max_{j\in K} \|c_j\|^{(k_1)}_{g_{Y_j},U_{Y_j},r}\right)^{|(J\cup\{0\})\backslash\{\nu-1\}|}=1;
\eeqs
if $K'=\emptyset$ then $\nu-|J\cup\{0,\nu-1\}|=0$ and we define
\beqs
\left(\max_{j\in K'} \|c_j\|^{(k_0)}_{g_{Y_j},U_{Y_j},r}\right)^{\nu-|J\cup\{0,\nu-1\}|}=1.
\eeqs
\end{lemma}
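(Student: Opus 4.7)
The plan is to imitate the strategy of the Bony--Chemin and Lerner arguments for the case $J=\emptyset$ (producing the baseline decay $\prod_{j=0}^{\nu-1}\delta_r(Y_j,Y_{j+1})^{-N_0}$), and to insert at each link indexed by $J$ a sharper composition estimate so as to extract the additional factor $\prod_{j\in J}\delta_r(Y_j,Y_{j+1})^{-N_1}$. Two standard workhorses from the Weyl--H\"ormander calculus of $g_Y$-confined symbols will be used: first, the $L^2$-continuity estimate for a single confined symbol, which supplies a fixed $k_0\in\NN$ and $C>0$ with $\|c^w\|_{\mathcal{L}_b(L^2(V;\tilde{V}))}\le C\|c\|^{(k_0)}_{g_Y,U_{Y,r}}$; second, the sharp composition estimate which for each prescribed $N\ge 0$ supplies $k_N\in\NN$ and $C_N>0$ such that
\begin{equation*}
\|c_i\#c_j\|^{(k)}_{g_{Y_i}\wedge g_{Y_j}}\le C_N\|c_i\|^{(k_N)}_{g_{Y_i},U_{Y_i,r}}\|c_j\|^{(k_N)}_{g_{Y_j},U_{Y_j,r}}\,\delta_r(Y_i,Y_j)^{-N}.
\end{equation*}

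I would then argue by induction on $|J|$. The base case $|J|=0$ is precisely Lerner's Lemma 2.6.25. For the inductive step, select $j_0\in J$, rewrite $c_{j_0}^w c_{j_0+1}^w=(c_{j_0}\#c_{j_0+1})^w$, and apply the sharp composition estimate with $N=N_0+N_1$; this absorbs the mandatory $\delta_r(Y_{j_0},Y_{j_0+1})^{-N_0}$ factor at link $j_0$ and extracts in addition the refinement $\delta_r(Y_{j_0},Y_{j_0+1})^{-N_1}$, at the price of controlling both $c_{j_0}$ and $c_{j_0+1}$ by the higher seminorm index $k_1$ (chosen once and for all at least as large as $k_{N_0+N_1}$). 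Substituting the merged symbol back yields a chain of $\nu$ operators whose sharp-link set is $J\backslash\{j_0\}$, to which the induction hypothesis applies directly.

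The main technical obstacle will be the careful bookkeeping encoded in the sets $K$ and $K'$. When both $j-1$ and $j$ belong to $J$, the symbol $c_j$ participates in two consecutive mergers and contributes its $k_1$-seminorm twice; the doubly-participating indices are exactly $J\cap(J+1)$, which is why $K'$ is defined as its complement in $\{1,\ldots,\nu-1\}$. Middle indices lying in $K$ always meet a sharp composition and must be estimated in the $k_1$-seminorm, whereas indices in $K'\backslash K$ are untouched by sharp compositions and retain the milder $k_0$-seminorm; the exponents $|(J\cup\{0\})\backslash\{\nu-1\}|$ and $\nu-|J\cup\{0,\nu-1\}|$ are then the cardinalities of these two occurrence sets, with the endpoint symbols $c_0$ and $c_\nu$ receiving their explicit $\|\cdot\|^{(k_1)}$ factors because they sit at the boundary of any possible merger. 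The prefactor $C_0^{\nu-|J|}C_1^{1+|J|}$ counts one $C_0$ per mild composition (there are $\nu-|J|$ such links), one $C_1$ per sharp composition (there are $|J|$), and one more $C_1$ from the concluding $L^2$-continuity step applied to the fully merged symbol.
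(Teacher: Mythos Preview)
Your inductive merging strategy has a real gap. When you replace $c_{j_0}^w c_{j_0+1}^w$ by $(c_{j_0}\#c_{j_0+1})^w$, the composition estimate you quote controls $c_{j_0}\#c_{j_0+1}$ only in the norms $\|\cdot\|^{(k)}_{g_{Y_{j_0}}\wedge g_{Y_{j_0+1}}}$; the merged symbol is \emph{not} $g_Y$-confined in a single ball $U_{Y,r}$ for any choice of $Y$. Consequently the shortened chain $c_0^w\ldots c_{j_0-1}^w(c_{j_0}\#c_{j_0+1})^w c_{j_0+2}^w\ldots c_\nu^w$ does not satisfy the hypotheses of the lemma (nor of Lerner's Lemma~2.6.25), and the induction hypothesis does not apply ``directly'' as you claim. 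A related difficulty: even if you declared the merged symbol to live at, say, $Y_{j_0}$, the base estimate applied to the new chain would produce the factor $\delta_r(Y_{j_0},Y_{j_0+2})^{-N_0}$ at the collapsed link, whereas the target inequality requires the two separate factors $\delta_r(Y_{j_0},Y_{j_0+1})^{-N_0}\delta_r(Y_{j_0+1},Y_{j_0+2})^{-N_0}$. The available triangle-type inequality for $\delta_r$ goes the wrong way to recover this.

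The paper does not merge and induct. It reruns the Bony--Chemin/Lerner Cotlar-type argument for the full chain in one pass, simply using at each link $j$ the pairwise operator estimate with exponent $N_0$ when $j\notin J$ and with exponent $N_0+N_1$ when $j\in J$. This directly yields a squared bound in which every middle index $j\in\{1,\ldots,\nu-1\}$ appears twice, once from the pair $(j-1,j)$ and once from the pair $(j,j+1)$; the seminorm order attached to each occurrence is $k_1$ if the corresponding link is in $J$ (or $J-1$) and $k_0$ otherwise. The remainder of the proof is pure combinatorics: one takes square roots and checks, in the four cases according to whether $0$ and $\nu-1$ lie in $J$, that the resulting mixed product is dominated by the expression in the statement, using that $k_0\le k_1$ to upgrade a few stray $k_0$-norms (at the indices $s=\min J$ and $t+1$ with $t=\max J$) to $k_1$-norms so that the exponents match $|(J\cup\{0\})\setminus\{\nu-1\}|$ and $\nu-|J\cup\{0,\nu-1\}|$. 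Your bookkeeping intuition about $K$, $K'$ and the constants $C_0^{\nu-|J|}C_1^{1+|J|}$ is essentially correct, but it should be attached to this one-shot Cotlar argument rather than to an induction.
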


\begin{proof} Applying the same technique as in the proof of \cite[Lemma 7.4]{bon-che} (see also the proof of \cite[Lemma 2.6.25, p. 155]{lernerB}) we infer\footnote{here and throughout the article we use the principle of vacuous (empty) product; i.e. $\prod_{j=1}^0r_j=1$}
\begin{align}
\|&c_0^w\ldots c_{\nu}^w\|^2_{\mathcal{L}_b(L^2(V;\tilde{V}))}\leq
C_0^{2\nu-2|J|}C_1^{2+2|J|}\left(\|c_0\|^{(k_1)}_{g_{Y_0},U_{Y_0,r}}\right)^2 \left(\|c_{\nu}\|^{(k_1)}_{g_{Y_{\nu}},U_{Y_{\nu},r}}\right)^2 \nonumber\\
&\cdot\left(\prod_{j\in J\backslash\{0\}} \|c_j\|^{(k_1)}_{g_{Y_j},U_{Y_j},r}\right)\left(\prod_{j\in (J+1)\backslash\{\nu\}} \|c_j\|^{(k_1)}_{g_{Y_j},U_{Y_j},r}\right)
\left(\prod_{\substack{j=1\\ j\not\in J}}^{\nu-1} \|c_j\|^{(k_0)}_{g_{Y_j},U_{Y_j},r}\right) \left(\prod_{\substack{j=1\\ j\not\in J+1}}^{\nu-1}\|c_j\|^{(k_0)}_{g_{Y_j},U_{Y_j},r}\right)\label{vsntek157}\\
&\cdot\left(\prod_{j=0}^{\nu-1}\delta_r(Y_j,Y_{j+1})^{-2N_0}\right) \left(\prod_{j\in J}\delta_r(Y_j,Y_{j+1})^{-2N_1}\right),\nonumber
\end{align}
with $C_0$ and $k_0\geq 2n+1$ depending only on $N_0$ and $C_1\geq C_0$ and $k_1\geq k_0$ depending on $N_0+N_1$. Now, one can deduce the claim in the lemma by considering the four cases depending on whether $0$ or $\nu-1$ belongs to $J$ or not. We illustrate the main ideas on the case when $0\not\in J$ and $\nu-1\not\in J$. As $J\neq \emptyset$, $\nu\geq 3$. Denote $s=\min\{j|\, j\in J\}$, $t=\max\{j|\, j\in J\}$. Clearly $1\leq s\leq t\leq\nu-2$; $K,K'\neq\emptyset$; $t+1\not\in J$; $s\not\in J+1$. Denote $\tilde{c}=\max_{j\in K} \|c_j\|^{(k_1)}_{g_{Y_j},U_{Y_j},r}$, $\tilde{\tilde{c}}=\max_{j\in K'} \|c_j\|^{(k_0)}_{g_{Y_j},U_{Y_j},r}$. The products in (\ref{vsntek157}) are equal to
\begin{align*}
&\left(\prod_{j\in J\backslash\{0\}} \|c_j\|^{(k_1)}_{g_{Y_j},U_{Y_j},r}\right)\left(\prod_{j\in (J+1)\backslash\{\nu\}} \|c_j\|^{(k_1)}_{g_{Y_j},U_{Y_j},r}\right)
\left(\prod_{\substack{j=1\\ j\neq t+1,\,j\not\in J}}^{\nu-1} \|c_j\|^{(k_0)}_{g_{Y_j},U_{Y_j},r}\right)\\
&\qquad\qquad\cdot\left(\prod_{\substack{j=1\\ j\neq s,\, j\not\in J+1}}^{\nu-1}\|c_j\|^{(k_0)}_{g_{Y_j},U_{Y_j},r}\right) \|c_{t+1}\|^{(k_0)}_{g_{Y_{t+1}},U_{Y_{t+1}},r} \|c_s\|^{(k_0)}_{g_{Y_s},U_{Y_s},r}\\
&\quad\leq \tilde{c}^{2|J|} \tilde{\tilde{c}}^{2(\nu-|J|-2)} \|c_{t+1}\|^{(k_0)}_{g_{Y_{t+1}},U_{Y_{t+1}},r} \|c_s\|^{(k_0)}_{g_{Y_s},U_{Y_s},r}.
\end{align*}
As $t+1,s\in K$ and $k_0\leq k_1$, we infer $\|c_s\|^{(k_0)}_{g_{Y_s},U_{Y_s},r}\leq \tilde{c}$, $\|c_{t+1}\|^{(k_0)}_{g_{Y_{t+1}},U_{Y_{t+1}},r}\leq \tilde{c}$ and the above is bounded by $\tilde{c}^{2(|J|+1)} \tilde{\tilde{c}}^{2(\nu-|J|-2)}$. As $|J|+1=|(J\cup\{0\})\backslash\{\nu-1\}|$ and $\nu-|J|-2=\nu-|J\cup\{0,\nu-1\}|$, we deduce the claim in the lemma.
\end{proof}

The following is a slight generalisation of \cite[Lemma 7.5]{bon-che} (cf. \cite[Lemma 2.6.26, p. 156]{lernerB}).

\begin{lemma}\label{estfortheprodlem}
Let $g$ be a geodesically temperate symplectic H\"ormader metric. There exist $C_0>0$ and $k_0\in\ZZ_+$ which depend only on the structure constants of $g$, and for all $k\in\NN$, there exist $C_1,N_1>0$, $k_1\in\ZZ_+$, such that for $\nu\in\ZZ_+$ and $a_1,\ldots,a_{\nu}\in S(1,g;\mathcal{L}_b(\tilde{V}))$ it holds
\beqs
\|a_1^w\ldots a_{\nu}^w\|^{(k)}_{op(1,g)}\leq C_1 (\nu+1)^{N_1}\left(C_0\max_{j=1,\ldots,\nu}\|a_j\|^{(k_0)}_{S(1,g;\mathcal{L}_b(\tilde{V}))}\right)^{\nu-4k} \left(C_1\max_{j=1,\ldots,\nu}\|a_j\|^{(k_1)}_{S(1,g;\mathcal{L}_b(\tilde{V}))}\right)^{4k}.
\eeqs
\end{lemma}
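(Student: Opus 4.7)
The strategy I would follow is that of Bony--Chemin \cite[Lemma~7.5]{bon-che}, but with the sharper Lemma \ref{lemmaonprodsub} used in place of its coarser predecessor so as to produce the fine $(\nu-4k)/4k$ exponent split. As a preparatory step, decompose each $a_j$ via the partition of unity $\varphi_{Y_j}$ from (\ref{vstnlp135}): the family $a_{j,Y_j}:=a_j\varphi_{Y_j}$ is $g_{Y_j}$-confined in $U_{Y_j,r}$ with
\beqs
\|a_{j,Y_j}\|^{(k)}_{g_{Y_j},U_{Y_j,r}}\leq C_k\|a_j\|^{(k)}_{S(1,g;\mathcal{L}_b(\tilde{V}))},\qquad Y_j\in W,
\eeqs
so that $a_1^w\cdots a_\nu^w=\int_{W^\nu}a_{1,Y_1}^w\cdots a_{\nu,Y_\nu}^w\prod_j|g_{Y_j}|^{1/2}\,dY_1\cdots dY_\nu$ in the Bochner sense. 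The idea is to bound the integrand pointwise in $(Y_1,\ldots,Y_\nu)$ by Lemma \ref{lemmaonprodsub} and then integrate out the confinement parameters.

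To estimate the operator seminorm, fix $Y\in W$, $l\leq k$, and linear forms $L_1,\ldots,L_l$ with $g_Y(L_i)\leq 1$. Writing $Y_0:=Y$, $c_0:=\theta_Y$, $c_j:=a_{j,Y_j}$ for $j\geq 1$, the Leibniz rule expands $\ad L_1^w\cdots\ad L_l^w\cdot(c_0^w\cdots c_\nu^w)$ into a sum of at most $(\nu+1)^l\leq(\nu+1)^k$ products $\tilde{c}_0^w\cdots\tilde{c}_\nu^w$ indexed by maps $\sigma\colon\{1,\ldots,l\}\to\{0,\ldots,\nu\}$, where $\tilde{c}_j$ is $c_j$ directionally differentiated in the directions dual to $\{L_i:\sigma(i)=j\}$. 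Each such differentiation preserves $g_{Y_j}$-confinement and raises the confinement-seminorm order by one; choosing $k_1$ in the current statement to exceed the $k_1$ of Lemma \ref{lemmaonprodsub} by at least $k$ therefore yields $\|\tilde{c}_j\|^{(k_1)}_{g_{Y_j},U_{Y_j,r}}\leq C\|a_j\|^{(k_1)}_{S(1,g;\mathcal{L}_b(\tilde{V}))}$ for $j\geq 1$ and a constant bound for $j=0$. The compatibility between the $g_Y$-unit bound on the $L_i$ and the confinement seminorms relative to $g_{Y_j}$ is provided by the geodesic temperance (\ref{cntepm135}): $g_{Y_j}(L_i)\leq Cg_Y(L_i)(1+d(Y,Y_j))^N$; the growth factor $(1+d(Y,Y_j))^N$ is controlled along the chain $Y_0,Y_1,\ldots,Y_\nu$ by combining the triangle inequality $d(Y,Y_j)\leq\sum_{i=0}^{j-1}d(Y_i,Y_{i+1})$ with a comparison $1+d(X,Z)\leq C\delta_r(X,Z)^{\alpha}$ (cf.\ \cite[Section~2.2.6]{lernerB}) and is then absorbed by enlarging $N_0$ in Lemma \ref{lemmaonprodsub}.

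For each Leibniz term, let $P\subseteq\{0,\ldots,\nu\}$ collect the positions receiving at least one commutator, so $|P|\leq l\leq k$. Choose $J\subseteq\{0,\ldots,\nu-1\}$ minimally so that for every interior $p\in P$ one has $\{p-1,p\}\cap J\neq\emptyset$; then $P\setminus\{0,\nu\}\subseteq K$ and $|J|\leq 2|P|\leq 2k$, whence Lemma \ref{lemmaonprodsub} forces at most $1+|K|\leq 4k$ of the factors among $a_1,\ldots,a_\nu$ to enter with the $k_1$-seminorm, the remaining ones being handled by the cheaper $k_0$-seminorm (the degenerate regime $\nu<4k$ is covered by the trivial padding $\|a\|^{(k_0)}\leq\|a\|^{(k_1)}$). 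A final increase of $N_0$ makes the iterated integral $\int_{W^\nu}\prod_{j=0}^{\nu-1}\delta_r(Y_j,Y_{j+1})^{-N_0}\prod_{j=1}^\nu|g_{Y_j}|^{1/2}\,dY_1\cdots dY_\nu$ bounded by a constant to the power $\nu$, which is swallowed by the $C_0^{\nu-4k}$ factor of the target bound; summing the $(\nu+1)^l$ Leibniz terms across $l\leq k$ yields the polynomial factor $(\nu+1)^{N_1}$. The main obstacle is the combinatorial matching of the Leibniz distribution with the set $J$: a loose choice would either inflate $|K|$ beyond $2k$ and destroy the $4k$-exponent, or require $k_1$-seminorms on linearly many factors in $\nu$, which would ruin the polynomial character of $(\nu+1)^{N_1}$ that the subsequent Neumann-type inversion argument relies on.
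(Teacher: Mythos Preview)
Your overall strategy---partition of unity, Leibniz expansion of the commutators, geodesic temperance along the chain $Y_0,\ldots,Y_\nu$, then Lemma~\ref{lemmaonprodsub}---matches the paper's, but two coupled technical choices destroy precisely the feature that distinguishes this lemma from its Bony--Chemin predecessor: that $C_0,k_0$ depend only on the structure constants of $g$ and \emph{not} on $k$.

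The first is your absorption of the temperance growth into $N_0$. After the triangle inequality and $1+d(X,Z)\leq C\delta_r(X,Z)^{\alpha}$, each of the at most $k$ derivatives contributes a $\delta_r$-weight spread over \emph{all} links $0,\ldots,j-1$; the exponent that $N_0$ must cancel on every $\delta_r(Y_i,Y_{i+1})$ is therefore of order $k$, and since $C_0,k_0$ in Lemma~\ref{lemmaonprodsub} are determined by $N_0$, they become $k$-dependent---contradicting the statement and spoiling the Neumann-series application in Theorem~\ref{invconsmoot}. The second is your condition $P\setminus\{0,\nu\}\subseteq K$: the cheap $k_0$-maximum in Lemma~\ref{lemmaonprodsub} ranges over $K'=[1,\nu-1]\setminus(J\cap(J+1))$, and $K\cap K'$ can be nonempty (already for $J=\{1\}$), so a differentiated $c_p$ may still land in the $k_0$-max, forcing $\|a_p\|^{(k_0+|E_p|)}$ into the $(\nu-4k)$-power and again making $k_0$ effectively $k$-dependent. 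The correct requirement is $P\setminus\{0,\nu\}\subseteq J\cap(J+1)$. The paper resolves both issues with a single construction: it uses the sharper bound $g_{Y_j}(T_\alpha)\leq C(\nu+1)^{N-1}\sum_{l<j}\delta_r(Y_l,Y_{l+1})^{N^2}$ (a sum, polynomial prefactor in $\nu$), expands the product of at most $k$ such sums into $\leq(\nu+1)^k$ monomials $F_\mu$ each supported on a set $J_\mu$ of $\leq k$ links, and for every $\mu$ enlarges $J_\mu$ to $\tilde J_\mu$ by adjoining $P\cup((P-1)\cap\NN)\cup\{\nu\}$, so that $|\tilde J_\mu|\leq 3k+1$ and $P\setminus\{0,\nu+1\}\subseteq\tilde J_\mu\cap(\tilde J_\mu+1)$ (this is (\ref{kstvcp157})). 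Then only undifferentiated $c_j$ appear in $\tilde K'_\mu$, while the temperance monomial $F_\mu$ is absorbed via $N_1=kN^2$ on $\tilde J_\mu$---affecting only $C_1,k_1$ and leaving $N_0$ (hence $C_0,k_0$) fixed by the integrability condition alone.
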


\begin{proof} Let $\varphi_Y\in\SSS(W)$, $Y\in W$, be the decomposition of unity given in \cite[Theorem 2.2.7, p. 70]{lernerB}, that is $Y\mapsto \varphi_Y$, $W\rightarrow \SSS(W)$, is a smooth family of non-negative functions such that $\operatorname{supp}\varphi_Y\subseteq U_{Y,r}$ and (\ref{vstnlp135}) holds true. Denote $a_{j,Y}=\varphi_Ya_j$, $j=1,\ldots,\nu$, and set $a_{0,Y}=\theta_YI$ and $a_{\nu+1,Y}=\varphi_YI$, with $I:\tilde{V}\rightarrow \tilde{V}$ being the identity operator. Let $k\in\ZZ_+$. Fix $Y_0\in W$ and let $L_j(X)=[T_j,X]$, $j=1,\ldots,k$, with $g_{Y_0}(T_j)=1$. Employing the same technique as in the proof of \cite[Lemma 2.6.26, p. 156]{lernerB} (see also the proof of \cite[Lemma 7.5]{bon-che}) we deduce that $\|\ad (L_1I)^w\ldots\ad (L_kI)^w \cdot(\theta_{Y_0}I)^wa_1^w\ldots a_{\nu}^w\|_{\mathcal{L}_b(L^2(V;\tilde{V}))}$ is bounded by sum of $(\nu+2)^k$ terms $\tilde{\omega}_k$ of the form
\beq\label{estforomek}
\tilde{\omega}_k=\int_{W^{\nu+1}}\|b_0^w\ldots b_{\nu+1}^w\|_{\mathcal{L}_b(L^2(V;\tilde{V}))} |g_{Y_1}|^{1/2}\ldots |g_{Y_{\nu+1}}|^{1/2} dY_1\ldots dY_{\nu+1},
\eeq
where $b_j=(\prod_{\alpha\in E_j} \partial_{T_{\alpha}})a_{j,Y_j}$ and $E_j$, $j=0,\ldots,\nu+1$, are disjoint possibly empty subsets of $\{1,\ldots,k\}$.\footnote{we employ the convention $\prod_{s\in\emptyset} B_s=\mathrm{Id}$} Let $J=\{j\in\NN|\, E_j\neq \emptyset\}$; clearly $|J|\leq\sum_j |E_j|\leq k$. Similarly as in the proof of \cite[Lemma 2.6.26, p. 156]{lernerB}, we define $c_j=b_j=a_{j,Y_j}$ for $j\not\in J$, and $c_j=b_j(\prod_{\alpha\in E_j} g_{Y_j}(T_{\alpha})^{-1/2})$ for $j\in J$. Employing the geodesic temperance of $g(=g^{\#})$, in an analogous fashion as in the proof of the quoted result, we infer
\beqs
g_{Y_j}(T_{\alpha})\leq C(\nu+1)^{N-1} \sum_{l=0}^{j-1}\delta_r(Y_l,Y_{l+1})^{N^2},\,\,\, j=1,\ldots,\nu+1,\, \alpha=1,\ldots,k,
\eeqs
where $C$ and $N$ depend only on the structure constants of $g$ and the constants in (\ref{cntepm135}). If $J\backslash\{0\}\neq \emptyset$, we infer
\beq
\|b_0^w\ldots b_{\nu+1}^w\|_{\mathcal{L}_b(L^2(V;\tilde{V}))}&\leq& C^k(\nu+1)^{(N-1)k}\|c_0^w\ldots c_{\nu+1}^w\|_{\mathcal{L}(L^2_b(V;\tilde{V}))}\prod_{j\in J\backslash\{0\}}\left(\sum_{l=0}^{j-1} \delta_r(Y_l,Y_{l+1})^{N^2}\right)^{|E_j|}\nonumber\\
&=& C^k(\nu+1)^{(N-1)k}\|c_0^w\ldots c_{\nu+1}^w\|_{\mathcal{L}(L^2_b(V;\tilde{V}))}\sum_{\mu} F_{\mu},\label{vclpke135}
\eeq
where, the very last sum has at most $(\nu+1)^k$ terms and each $F_{\mu}$ is of the following form
\beqs
F_{\mu}=\prod_{j=0}^{\nu}\delta_r(Y_j,Y_{j+1})^{m_{j,\mu}N^2}
\eeqs
with $m_{j,\mu}\in\NN$ satisfying $\sum_{j=0}^{\nu}m_{j,\mu}\leq\sum_{j\in J}|E_j|\leq k$. If $J=\{0\}$ (i.e. only $E_0$ is non-empty) then (\ref{vclpke135}) remains true if the sum over $\mu$ has only one term $F_{\mu}=1$, i.e. $m_{j,\mu}=0$, $j=0,\ldots,\nu$. For each $\mu$, let $J_{\mu}=\{j\in\NN|\, m_{j,\mu}\neq0\}$; clearly $|J_{\mu}|\leq \sum_j m_{j,\mu}\leq k$. Define
\beqs
\tilde{F}_{\mu}=F_{\mu}\prod_{\substack{j\in ((J\cup\{\nu\})\backslash\{\nu+1\})\cup ((J-1)\cap\NN)\\ j\not\in J_{\mu}}} \delta_r(Y_j,Y_{j+1})^{N^2}= \prod_{j=0}^{\nu} \delta_r(Y_j,Y_{j+1})^{\tilde{m}_{j,\mu}N^2}.
\eeqs
Then $\sum_{j=0}^{\nu}\tilde{m}_{j,\mu}\leq 3k+1$. Let $\tilde{J}_{\mu}=\{j\in\NN|\, \tilde{m}_{j,\mu}\neq0\}$. Then $|\tilde{J}_{\mu}|\leq 3k+1$, $\nu\in\tilde{J}_{\mu}$ and
\beq
\|b_0^w\ldots b_{\nu+1}^w\|_{\mathcal{L}(L^2(V;\tilde{V}))}&\leq& C^k(\nu+1)^{(N-1)k}\sum_{\mu}\|c_0^w\ldots c_{\nu+1}^w\|_{\mathcal{L}(L^2(V;\tilde{V}))}\tilde{F}_{\mu};\label{nrtscl135}\\
J\backslash\{0,\nu+1\}&\subseteq& \tilde{J}_{\mu}\cap(\tilde{J}_{\mu}+1).\label{kstvcp157}
\eeq
For each $\mu$ we apply Lemma \ref{lemmaonprodsub} with $N_0\geq 0$ such that $\sup_{Y\in W}\int_{W}\delta_r(Y,Z)^{-N_0}|g_Z|^{1/2}dZ <\infty$, $N_1=kN^2$ and $\tilde{J}_{\mu}\subseteq\{0,\ldots,\nu\}$ (with $\nu+1$ in place of $\nu$) to obtain
\begin{multline*}
\|c_0^w\ldots c_{\nu+1}^w\|_{\mathcal{L}(L^2(V;\tilde{V}))}\leq C_0^{\nu+1-|\tilde{J}_{\mu}|}C_1^{1+|\tilde{J}_{\mu}|} \|c_0\|^{(k_1)}_{g_{Y_0},U_{Y_0,r}} \|c_{\nu+1}\|^{(k_1)}_{g_{Y_{\nu+1}},U_{Y_{\nu+1},r}}\\
\cdot\left(\max_{j\in \tilde{K}'_{\mu}} \|c_j\|^{(k_0)}_{g_{Y_j},U_{Y_j},r}\right)^{\nu+1-|\tilde{J}_{\mu}\cup\{0,\nu\}|} \left(\max_{j\in \tilde{K}_{\mu}} \|c_j\|^{(k_1)}_{g_{Y_j},U_{Y_j},r}\right)^{|(\tilde{J}_{\mu}\cup\{0\})\backslash\{\nu\}|}\\
\cdot\prod_{j=0}^{\nu}\delta_r(Y_j,Y_{j+1})^{-N_0} \prod_{j\in \tilde{J}_{\mu}}\delta_r(Y_j,Y_{j+1})^{-kN^2},
\end{multline*}
where $\tilde{K}_{\mu}=(\tilde{J}_{\mu}\cup (\tilde{J}_{\mu}+1))\backslash\{0,\nu+1\}\neq\emptyset$ (since $\nu\in\tilde{J}_{\mu}$) and $\tilde{K}'_{\mu}=(\NN\cap[1,\nu])\backslash (\tilde{J}_{\mu}\cap(\tilde{J}_{\mu}+1))$; of course, we may assume $k_0\leq k_1$. Notice that $\tilde{K}'_{\mu}=\emptyset$ if and only if $\tilde{J}_{\mu}=\{0,\ldots,\nu\}$. By construction, there exists $C'\geq 1$, which depends only on the structure constants of $g$, such that $\|c_j\|^{(l)}_{g_{Y_j},U_{Y_j},r}\leq C'^k\|a_{j,Y_j}\|^{(l+k)}_{g_{Y_j},U_{Y_j},r}$, for all $l\in\NN$, $j=1,\ldots,\nu$. Furthermore, if $j\in \tilde{K}'_{\mu}$, (\ref{kstvcp157}) implies $c_j= a_{j,Y_j}$. Finally, notice that the seminorms of $c_0$ and $c_{\nu+1}$ depend only on the structure constants of $g$ (recall $a_{0,Y_0}=\theta_{Y_0}I$, $a_{\nu+1,Y}=\varphi_YI$). Plugging these estimates in (\ref{nrtscl135}) we infer (since $\nu\in\tilde{J}_{\mu}$)
\begin{multline}\label{estforbs}
\|b_0^w\ldots b_{\nu+1}^w\|_{\mathcal{L}(L^2(V;\tilde{V}))}\leq C'_1(\nu+1)^{(N-1)k}\prod_{j=0}^{\nu}\delta_r(Y_j,Y_{j+1})^{-N_0}\\
\cdot \sum_{\mu} \left(C'_0\max_{j=1\ldots,\nu} \|a_j\|^{(k_0)}_{S(1,g;\mathcal{L}_b(\tilde{V}))}\right)^{\nu+1-|\tilde{J}_{\mu}\cup\{0\}|} \left(C'_1\max_{j=1,\ldots,\nu} \|a_j\|^{(k_1+k)}_{S(1,g;\mathcal{L}_b(\tilde{V}))}\right)^{|\tilde{J}_{\mu}\cup\{0\}|-1},
\end{multline}
with $C'_0$ depending only on the structure constants of $g$ and $C'_1$ independent of $\nu$ and $a_1,\ldots,a_{\nu}$. Since $|\tilde{J}_{\mu}\cup\{0\}|\leq 3k+2\leq 4k+1$ we deduce
\begin{multline*}
\left(C'_0\max_{j=1\ldots,\nu} \|a_j\|^{(k_0)}_{S(1,g;\mathcal{L}_b(\tilde{V}))}\right)^{\nu+1-|\tilde{J}_{\mu}\cup\{0\}|} \left(C'_1\max_{j=1,\ldots,\nu} \|a_j\|^{(k_1+k)}_{S(1,g;\mathcal{L}_b(\tilde{V}))}\right)^{|\tilde{J}_{\mu}\cup\{0\}|-1}\\
\leq\left(C'_0\max_{j=1\ldots,\nu} \|a_j\|^{(k_0)}_{S(1,g;\mathcal{L}_b(\tilde{V}))}\right)^{\nu-4k} \left(C'_1\max_{j=1,\ldots,\nu} \|a_j\|^{(k_1+k)}_{S(1,g;\mathcal{L}_b(\tilde{V}))}\right)^{4k}.
\end{multline*}
Having in mind the latter and the fact that the sum over $\mu$ has at most $(\nu+1)^k$ terms, we can employ the estimate (\ref{estforbs}) in (\ref{estforomek}) to conclude the claim in the lemma; the estimates for $\|(\theta_{Y_0}I)^wa_1^w\ldots a_{\nu}^w\|_{\mathcal{L}_b(L^2(V;\tilde{V}))}$ (when $k=0$) can be obtained in analogous fashion as for the case when $k\in\ZZ_+$.
\end{proof}

The following remarks will prove useful throughout the rest of the article; we state them here for convenience but we will frequently tacitly apply them.

\begin{remark}
If $E_j$, $j=1,2$, are two locally compact Hausdorff topological spaces and $\mathbf{f}_j:E_j\rightarrow \SSS(W;\mathcal{L}_b(\tilde{V}))$, $j=1,2$, continuous mappings, then the mapping
\beq\label{contmap}
(\lambda,\mu)\mapsto \mathbf{f}_1(\lambda)\#\mathbf{f}_2(\mu),\,\,\, E_1\times E_2\rightarrow \SSS(W;\mathcal{L}_b(\tilde{V})),
\eeq
is continuous; this is a direct consequence of \cite[Corollary 2.3.3, p. 85]{lernerB}. Consequently, if $E_1=E_2=E$, the mapping $\lambda\mapsto \mathbf{f}_1(\lambda)\# \mathbf{f}_2(\lambda)$, $E\rightarrow \SSS(W;\mathcal{L}_b(\tilde{V}))$, is continuous.\\
\indent If $E_j$, $j=1,2$, are as above and $\mathbf{f}_j:E_j\rightarrow S(M_j,g;\mathcal{L}_b(\tilde{V}))$, $j=1,2$, are continuous mappings where $M_1$ and $M_2$ are admissible weights for $g$ then \cite[Theorem 2.3.7, p. 91]{lernerB} verifies that the mapping
\beq\label{conwithvalinss}
(\lambda,\mu)\mapsto \mathbf{f}_1(\lambda)\#\mathbf{f}_2(\mu),\,\,\, E_1\times E_2\rightarrow S(M_1M_2,g;\mathcal{L}_b(\tilde{V}))
\eeq
is continuous. Again, if $E_1=E_2=E$, this implies that the mapping $\lambda\mapsto \mathbf{f}_1(\lambda)\# \mathbf{f}_2(\lambda)$, $E\rightarrow S(M_1M_2,g;\mathcal{L}_b(\tilde{V}))$, is also continuous.
\end{remark}

\begin{remark}
If $E_j$, $j=1,2$, are two smooth manifolds without boundary (we always assume the smooth manifolds are paracompact) and $\mathbf{f}_j:E_j\rightarrow \SSS(W;\mathcal{L}_b(\tilde{V}))$, $j=1,2$, are of class $\mathcal{C}^N$, $0\leq N\leq\infty$, then so is the map (\ref{contmap}); this can be easily derived from \cite[Corollary 2.3.3, p. 85]{lernerB} (in fact, since the problem is local in nature, it is enough to prove it when $E_1$ and $E_2$ are Euclidean spaces). If $X_j$ is a smooth vector field on $E_j$ and $\mathbf{f}_j$ is smooth, $j=1,2$, then
\beq\label{smoothmappi}
X_1\times X_2(\mathbf{f}_1(\lambda)\#\mathbf{f}_2(\mu))= X_1\mathbf{f}_1(\lambda)\#\mathbf{f}_2(\mu)+ \mathbf{f}_1(\lambda)\# X_2\mathbf{f}_2(\mu)
\eeq
(of course, $X_1\mathbf{f}_1(\lambda)$ and $X_2\mathbf{f}_2(\mu)$ are smooth maps into $\SSS(W;\mathcal{L}_b(\tilde{V}))$). Consequently, if $E_1=E_2=E$, the mapping $\lambda\mapsto \mathbf{f}_1(\lambda)\#\mathbf{f}_2(\lambda)$, $E\rightarrow \SSS(W;\mathcal{L}_b(\tilde{V}))$, is smooth and the smooth vector fields on $E$ are derivations of the algebra $\mathcal{C}^{\infty}(E;\SSS(W;\mathcal{L}_b(\tilde{V})))$ (with the associative product $\#$). If $\mathbf{f}_j$, $j=1,2$, are of class $\mathbf{C}^N$, $N\geq 1$, and $X$ a smooth vector field on $E$, we still have
\beq\label{vckntp137}
X(\mathbf{f}_1(\lambda)\#\mathbf{f}_2(\lambda))=X\mathbf{f}_1(\lambda)\#\mathbf{f}_2(\lambda)+\mathbf{f}_1(\lambda)\# X\mathbf{f}_2(\lambda).
\eeq
\indent If $E_1$ and $E_2$ are as above and $\mathbf{f}_j:E_j\rightarrow S(M_j,g;\mathcal{L}_b(\tilde{V}))$, $j=1,2$, are of class $\mathcal{C}^N$, $0\leq N\leq\infty$, where $M_j$, $j=1,2$, are admissible weights for $g$, then the mapping (\ref{conwithvalinss}) is also of class $\mathcal{C}^N$ (by \cite[Theorem 2.3.7, p. 91]{lernerB}). When $\mathbf{f}_j$, $j=1,2$, are smooth, (\ref{smoothmappi}) holds true; in particular, if $E_1=E_2=E$ and $M_1=M_2=1$, the smooth vector fields on $E$ are derivations of the unital algebra $\mathcal{C}^{\infty}(E;S(1,g;\mathcal{L}_b(\tilde{V})))$ (with the associative product $\#$). Furthermore, if $\mathbf{f}_1$ and $\mathbf{f}_2$ are of class $\mathcal{C}^N$, $N\geq 1$, and $X$ a smooth vector field on $E$, (\ref{vckntp137}) remains valid.
\end{remark}

The main result of the section is the following.

\begin{theorem}\label{invconsmoot}
Assume that $g$ is a geodesically temperate H\"ormander metric. Let $E$ be a Hausdorff topological space and $\mathbf{f}:E\rightarrow S(1,g;\mathcal{L}_b(\tilde{V}))$ a continuous mapping. If for each $\lambda\in E$, $\mathbf{f}(\lambda)^w$ is invertible operator on $L^2(V;\tilde{V})$, then there exists a unique continuous mapping $\tilde{\mathbf{f}}:E\rightarrow S(1,g;\mathcal{L}_b(\tilde{V}))$ such that
\beq\label{invconmap}
\tilde{\mathbf{f}}(\lambda)\#\mathbf{f}(\lambda)=\mathbf{f}(\lambda)\#\tilde{\mathbf{f}}(\lambda)=I,\,\,\, \forall\lambda\in E.
\eeq
If $E$ is a smooth manifold without boundary and $\mathbf{f}:E\rightarrow S(1,g;\mathcal{L}_b(\tilde{V}))$ is of class $\mathcal{C}^N$, $0\leq N\leq \infty$, then $\tilde{\mathbf{f}}:E\rightarrow S(1,g;\mathcal{L}_b(\tilde{V}))$ is also of class $\mathcal{C}^N$.
\end{theorem}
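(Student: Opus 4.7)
The plan proceeds in three stages: first to establish pointwise existence and uniqueness of $\tilde{\mathbf{f}}(\lambda)$ via spectral invariance, then to prove continuity of $\tilde{\mathbf{f}}$ by inverting $I + q(\lambda)$ through a Neumann series convergent in the $(F)$-topology of $S(1,g;\mathcal{L}_b(\tilde{V}))$, and finally to bootstrap to $\mathcal{C}^N$ regularity via the derivative formula $X\tilde{\mathbf{f}} = -\tilde{\mathbf{f}}\#X\mathbf{f}\#\tilde{\mathbf{f}}$. For the first stage, the invertibility of $\mathbf{f}(\lambda)^w$ on $L^2(V;\tilde{V})$ at each $\lambda \in E$, combined with the Bony--Chemin spectral invariance of the Weyl--H\"ormander calculus under geodesic temperance (extended to $\mathcal{L}_b(\tilde{V})$-valued symbols via $S(1,g;\mathcal{L}_b(\tilde{V})) \cong S(1,g) \otimes \mathcal{L}_b(\tilde{V})$), yields a unique $\tilde{\mathbf{f}}(\lambda) \in S(1,g;\mathcal{L}_b(\tilde{V}))$ satisfying (\ref{invconmap}); injectivity of the Weyl quantisation delivers uniqueness at the symbol level, so $\tilde{\mathbf{f}}:E\to S(1,g;\mathcal{L}_b(\tilde{V}))$ is a well-defined set-theoretic map.

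For continuity at a fixed $\lambda_0 \in E$, I would set $b_0 = \tilde{\mathbf{f}}(\lambda_0)$ and, for $\lambda$ near $\lambda_0$, introduce $q(\lambda) = b_0 \# (\mathbf{f}(\lambda) - \mathbf{f}(\lambda_0))$, so that $b_0 \# \mathbf{f}(\lambda) = I + q(\lambda)$ and $q(\lambda) \to 0$ in $S(1,g;\mathcal{L}_b(\tilde{V}))$ by joint continuity of $\#$. Defining $p(\lambda)$ as the Neumann series $\sum_{\nu \geq 0} (-q(\lambda))^{\#\nu}$, the uniqueness from stage one will give $\tilde{\mathbf{f}}(\lambda) = p(\lambda)\#b_0$ once the series is shown to converge in $S(1,g;\mathcal{L}_b(\tilde{V}))$ and to invert $I + q(\lambda)$. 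Convergence is where Lemma \ref{estfortheprodlem} enters: applied with $a_1 = \cdots = a_\nu = -q(\lambda)$ it bounds $\|((-q(\lambda))^{\#\nu})^w\|^{(k)}_{op(1,g)}$ by an expression of the form
\[
C(\nu+1)^{N_1}\bigl(C_0 \|q(\lambda)\|^{(k_0)}_{S(1,g;\mathcal{L}_b(\tilde{V}))}\bigr)^{\nu - 4k}\bigl(C_1 \|q(\lambda)\|^{(k_1)}_{S(1,g;\mathcal{L}_b(\tilde{V}))}\bigr)^{4k},
\]
and the Bony--Chemin converse recalled in the preliminaries upgrades this to an analogous bound on $\|(-q(\lambda))^{\#\nu}\|^{(k)}_{S(1,g;\mathcal{L}_b(\tilde{V}))}$. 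On a neighbourhood of $\lambda_0$ where $C_0 \|q(\lambda)\|^{(k_0)} < 1/2$, the series then converges geometrically in each seminorm, the partial sums depend continuously on $\lambda$ by joint continuity of $\#$, and $p(\lambda) \to p(\lambda_0) = I$; hence $\tilde{\mathbf{f}}$ is continuous at $\lambda_0$, and as $\lambda_0 \in E$ was arbitrary, on all of $E$.

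For the $\mathcal{C}^N$ statement I would induct on $N$, starting from the algebraic identity
\[
\tilde{\mathbf{f}}(\mu) - \tilde{\mathbf{f}}(\lambda) = -\tilde{\mathbf{f}}(\mu)\#(\mathbf{f}(\mu) - \mathbf{f}(\lambda))\#\tilde{\mathbf{f}}(\lambda),
\]
which is immediate from $\tilde{\mathbf{f}}(\mu)\#\mathbf{f}(\mu) = I = \tilde{\mathbf{f}}(\lambda)\#\mathbf{f}(\lambda)$. Dividing by an incremental parameter along a smooth vector field $X$ on $E$ and passing to the limit, using the continuity of $\tilde{\mathbf{f}}$ already proved, the $\mathcal{C}^1$ regularity of $\mathbf{f}$, and the joint continuity of $\#$ on $S(1,g;\mathcal{L}_b(\tilde{V})) \times S(1,g;\mathcal{L}_b(\tilde{V}))$, I obtain $X\tilde{\mathbf{f}}(\lambda) = -\tilde{\mathbf{f}}(\lambda)\#X\mathbf{f}(\lambda)\#\tilde{\mathbf{f}}(\lambda)$, which is continuous in $\lambda$, settling $N = 1$. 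For the inductive step, if $\tilde{\mathbf{f}}$ is known to be $\mathcal{C}^{N-1}$ and $\mathbf{f}$ is $\mathcal{C}^N$, then the right-hand side of the derivative formula is a $\#$-product of three $\mathcal{C}^{N-1}$ maps and hence $\mathcal{C}^{N-1}$ by the remarks on smoothness of $\#$, so $\tilde{\mathbf{f}} \in \mathcal{C}^N$; the case $N = \infty$ follows. The main obstacle throughout is the continuity step: one needs convergence of the Neumann series not merely at the operator level, where classical Banach inversion would suffice, but in the Fr\'echet topology of $S(1,g;\mathcal{L}_b(\tilde{V}))$, which is precisely why the sharpened quantitative form of Lemma \ref{estfortheprodlem} with its polynomial-in-$\nu$ prefactor and geometric decay in $\|q\|^{(k_0)}$, together with the Bony--Chemin converse for passing from operator back to symbol seminorms, are indispensable.
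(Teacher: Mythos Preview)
Your outline is sound in spirit, and the $\mathcal{C}^N$ bootstrap via the derivative formula $X\tilde{\mathbf{f}}=-\tilde{\mathbf{f}}\#X\mathbf{f}\#\tilde{\mathbf{f}}$ is essentially how the paper handles the induction once continuity in $S(1,g;\mathcal{L}_b(\tilde{V}))$ is known. There is, however, a real gap in the continuity step.

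Lemma~\ref{estfortheprodlem} is stated only for a geodesically temperate \emph{symplectic} H\"ormander metric. For a general $g$ the lemma applies only to $g^{\#}$, so your bound on $\|((-q(\lambda))^{\#\nu})^w\|^{(k)}_{op}$ is an $op(1,g^{\#})$ bound, and the Bony--Chemin converse then controls the $S(1,g^{\#})$ seminorms, not the $S(1,g)$ seminorms you wrote down. Your Neumann series therefore converges in $S(1,g^{\#};\mathcal{L}_b(\tilde{V}))$ but not a priori in $S(1,g;\mathcal{L}_b(\tilde{V}))$; the inclusion $S(1,g)\hookrightarrow S(1,g^{\#})$ is continuous but the $S(1,g)$ topology is strictly finer. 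The same defect blocks your $\mathcal{C}^1$ limit: passing to the limit in the difference-quotient identity requires $\tilde{\mathbf{f}}(\mu)\to\tilde{\mathbf{f}}(\lambda)$ in $S(1,g)$, which is exactly what is still unproved.

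The paper closes this gap by a two-stage argument. First it treats the symplectic case $g=g^{\#}$ directly; there your line of reasoning is valid, and your choice $q(\lambda)=b_0\#(\mathbf{f}(\lambda)-\mathbf{f}(\lambda_0))$, small in every symbol seminorm, is in fact a bit simpler than the paper's $\mathbf{r}(\lambda)=I-C^{-1}\mathbf{f}(\lambda)^*\#\mathbf{f}(\lambda)$, which is small only in $\mathcal{L}_b(L^2)$-norm and hence needs an interpolation inequality in addition to Lemma~\ref{estfortheprodlem}. Second, for general $g$, the paper differentiates the identity $\tilde{\mathbf{f}}\#\mathbf{f}=I$ in phase-space directions $T_1,\ldots,T_k$ with $g_S(T_j)=1$, expressing $\partial_{T_1}\cdots\partial_{T_k}\tilde{\mathbf{f}}$ as a finite sum of $\#$-products of $\tilde{\mathbf{f}}$ and $\partial_{T_{l_1}}\cdots\partial_{T_{l_m}}\mathbf{f}$. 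The point is that the latter lie in $S(M_S^{T_{l_1},\ldots,T_{l_m}},g^{\#})$ with structure constants \emph{uniform} in $S$ and the $T_j$; the continuity of $\#$ in the $g^{\#}$ calculus together with the already-established $S(1,g^{\#})$-continuity of $\tilde{\mathbf{f}}$ then yields estimates uniform in $S,T_j$, and taking the supremum over these recovers precisely the $S(1,g)$ seminorm. Your argument omits this transfer from $g^{\#}$ to $g$ entirely.
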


\begin{proof} The existence of $\tilde{\mathbf{f}}:E\rightarrow S(1,g;\mathcal{L}_b(\tilde{V}))$ which satisfies (\ref{invconmap}) is a direct consequence of \cite[Theorem 2.6.27, p. 158]{lernerB}\footnote{this result is given only for the scalar valued case, but one can easily verify that the same proof works in the vector valued case as well} and the uniqueness easily follows from the fact that $S(1,g;\mathcal{L}_b(\tilde{V}))$ is a unital associative algebra. We need to proof the continuity and the fact that $\tilde{\mathbf{f}}$ is of class $\mathcal{C}^N$, respectively. Throughout the proof, we fix an inner product on $\tilde{V}$ and denote by $\|\cdot\|_{\tilde{V}}$ and $\|\cdot\|_{\mathcal{L}_b(\tilde{V})}$ the induced norms.\\
\indent The continuity of $\tilde{\mathbf{f}}$ follows from general facts on Fr\'echet algebras because of the following. The set of invertible elements of the Banach algebra $\mathcal{L}_b(L^2(V;\tilde{V}))$ is open and thus its inverse image under the continuous mapping  $S(1,g;\mathcal{L}_b(\tilde{V}))\rightarrow \mathcal{L}_b(L^2(V;\tilde{V}))$, $a\mapsto a^w$, is open in $S(1,g;\mathcal{L}_b(V;\tilde{V}))$ and it coincides with the set of invertible elements of $S(1,g;\mathcal{L}_b(\tilde{V}))$ because of spectral invariance \cite[Theorem 2.6.27, p. 158]{lernerB}. Hence \cite[Chapter 7, Proposition 2, p. 113]{waelb} implies that the inversion on this set (equipped with the topology induced by $S(1,g;\mathcal{L}_b(\tilde{V}))$) is continuous which implies that $\tilde{\mathbf{f}}$ is continuous. However, we will give a direct proof of the continuity of $\tilde{\mathbf{f}}$ in the case $E$ is a locally compact Hausdorff topological space since much of the ideas (and notations) we employ in this case will become useful for the part concerning the assertion that $\tilde{\mathbf{f}}$ is of class $\mathcal{C}^N$ when $E$ is a smooth manifold.\\
\indent Assume first that $g$ is symplectic; thus $g=g^{\#}=g^{\sigma}$. Let $E$ be a locally compact Hausdorff topological space and let $\mathbf{r}:E\rightarrow S(1,g;\mathcal{L}_b(\tilde{V}))$ be a continuous mapping such that $\|\mathbf{r}(\lambda)^w\|_{\mathcal{L}_b(L^2(V;\tilde{V}))}<1$, $\forall \lambda\in E$. Then $(\mathrm{Id}-\mathbf{r}(\lambda)^w)^{-1}= \mathrm{Id}+\sum_{m=1}^{\infty}(\mathbf{r}(\lambda)^w)^m$ as operators on $L^2(V;\tilde{V})$. Fix $\lambda_0\in E$ and a compact neighbourhood $K$ of $\lambda_0$. There exists $0<\varepsilon<1$ such that $\sup_{\lambda\in K}\|\mathbf{r}(\lambda)^w\|_{\mathcal{L}_b(L^2(V;\tilde{V}))}\leq \varepsilon$ and, for every $k\in\NN$ there exists $\tilde{C}_k\geq 1$ such that $\sup_{\lambda\in K} \|\mathbf{r}(\lambda)\|_{S(1,g;\mathcal{L}_b(\tilde{V}))}^{(k)}\leq \tilde{C}_k$. Now, in analogous way as in the first part of the proof of \cite[Theorem 2.6.27, p. 158]{lernerB} one deduces that for each $k\in\NN$ there exists $0<\varepsilon_k<1$ and $\tilde{C}'_k\geq 1$ such that $\sup_{\lambda\in K}\|\mathbf{r}(\lambda)^{\# m}\|_{S(1,g;\mathcal{L}_b(\tilde{V}))}^{(k)}\leq \tilde{C}'_k\varepsilon_k^m$. Thus $I+\sum_{m=1}^{\infty}\mathbf{r}(\lambda)^{\# m}$ converges to a continuous function $\mathbf{R}:E\rightarrow S(1,g;\mathcal{L}_b(\tilde{V}))$ such that $\mathbf{R}(\lambda)^w$ is the inverse of $\mathrm{Id}-\mathbf{r}(\lambda)^w$ in $L^2(V;\tilde{V})$; a direct inspection also yields $(I-\mathbf{r}(\lambda))\# \mathbf{R}(\lambda)= \mathbf{R}(\lambda)\#(I-\mathbf{r}(\lambda))=I$, for all $\lambda\in E$. If $E$ is a smooth $p$-dimensional manifold and $\mathbf{r}:E\rightarrow S(1,g;\mathcal{L}_b(\tilde{V}))$ is of class $\mathcal{C}^N$, $1\leq N\leq \infty$, such that $\|\mathbf{r}(\lambda)^w\|_{\mathcal{L}_b(L^2(V;\tilde{V}))}<1$, $\forall \lambda\in E$, we claim that $\lambda\mapsto I+\sum_{m=1}^{\infty}\mathbf{r}(\lambda)^{\# m}$, $E\rightarrow S(1,g;\mathcal{L}_b(\tilde{V}))$, is also of class $\mathcal{C}^N$ (it is continuous by the above considerations). Let $\lambda_0\in E$ be arbitrary but fixed. Let $K$ be a compact neighbourhood of $\lambda_0$ included in a coordinate neighbourhood $U$ with local coordinates $(\lambda^{1},\ldots,\lambda^{p})$ about $\lambda_0$. Let $q\in\ZZ_+$, $q\leq N$, be arbitrary but fixed and denote $\tilde{C}=1+\sup_{\alpha\in\NN^p,\,|\alpha|\leq q}\sup_{\lambda\in K} \|\partial^{\alpha}_{\lambda}\mathbf{r}(\lambda)^w\|_{\mathcal{L}_b(L^2(V;\tilde{V}))} <\infty$. For each $\alpha\in\NN^p$, $|\alpha|\leq q$, $\partial^{\alpha}_{\lambda}(\mathbf{r}(\lambda)^{\# m})$ is a sum of $m^{|\alpha|}$ terms
\beqs
b_{\lambda,1}\#\ldots\# b_{\lambda,m},\,\,\, \mbox{with}\,\,\, b_{\lambda,j}=\partial^{\beta^{(j)}}\mathbf{r}(\lambda),\, j=1,\ldots,m,\,\,\, \mbox{and}\,\,\, \sum_{j=1}^m\beta^{(j)}=\alpha.
\eeqs
For $k\in\NN$, we apply \cite[Theorem 2.6.12, p. 145]{lernerB} with $\tau\in[0,1)$ to be chosen later to deduce the existence of $C'_{k,\tau}\geq 1$ and $l_{k,\tau}\in\ZZ_+$ such that
\beqs
\|(b_{\lambda,1}\#\ldots\# b_{\lambda,m})^w\|^{(k)}_{op(1,g)}\leq C'_{k,\tau}\left(\|(b_{\lambda,1}\#\ldots\# b_{\lambda,m})^w\|^{(0)}_{op(1,g)}\right)^{\tau} \left(\|b_{\lambda,1}^w\ldots b_{\lambda,m}^w\|^{(l_{k,\tau})}_{op(1,g)}\right)^{1-\tau}.
\eeqs
When $m\geq 2q$, at least $m-q\geq m/2$ of the terms $b_{\lambda,j}$ are just $\mathbf{r}(\lambda)$ and thus
\beqs
\|(b_{\lambda,1}\#\ldots\# b_{\lambda,m})^w\|^{(0)}_{op(1,g)} \leq C'\|b_{\lambda,1}^w\ldots b_{\lambda,m}^w\|_{\mathcal{L}_b(L^2(V;\tilde{V}))}\leq C' \tilde{C}^q\|\mathbf{r}(\lambda)^w\|_{\mathcal{L}_b(L^2(V;\tilde{V}))}^{m/2}.
\eeqs
To estimate $\|b_{\lambda,1}^w\ldots b_{\lambda,m}^w\|^{(l_{k,\tau})}_{op(1,g)}$, we apply Lemma \ref{estfortheprodlem} to conclude
\begin{multline*}
\|b_{\lambda,1}^w\ldots b_{\lambda,m}^w\|^{(l_{k,\tau})}_{op(1,g)}\leq C_{l_{k,\tau}} (m+1)^{N_{l_{k,\tau}}}\left(C_0\max_{j=1,\ldots,m} \|b_{\lambda,j}\|^{(k_0)}_{S(1,g;\mathcal{L}_b(\tilde{V}))}\right)^{m-4l_{k,\tau}}\\
\cdot \left(C_1\max_{|\beta|\leq q} \|\partial^{\beta}_{\lambda}\mathbf{r}(\lambda)\|^{(k_1(l_{k,\tau}))}_{S(1,g;\mathcal{L}_b(\tilde{V}))}\right)^{4l_{k,\tau}}.
\end{multline*}
Denote $\tilde{C}_1=\max_{|\beta|\leq q}\sup_{\lambda\in K} \|\partial^{\beta}_{\lambda}\mathbf{r}(\lambda)\|^{(k_0)}_{S(1,g;\mathcal{L}_b(\tilde{V}))}<\infty$ (recall, $k_0$ depends only on the structure constants of $g$). Then, for $m\geq\max\{2q,4l_{k,\tau}\}$ we deduce
\begin{multline*}
\|(\partial^{\alpha}_{\lambda}\mathbf{r}(\lambda)^{\# m})^w\|^{(k)}_{op(1,g)}\leq C''_{k,\tau}\tilde{C}^qm^q(m+1)^{N_{l_{k,\tau}}} \|\mathbf{r}(\lambda)^w\|_{\mathcal{L}_b(L^2(V;\tilde{V}))}^{\tau m/2}(C_0\tilde{C}_1)^{(m-4l_{k,\tau})(1-\tau)}\\
\cdot\left(C_1\sup_{\substack{\lambda\in K\\ |\beta|\leq q}} \|\partial^{\beta}_{\lambda}\mathbf{r}(\lambda)\|^{(k_1(l_{k,\tau}))}_{S(1,g;\mathcal{L}_b(\tilde{V}))}\right)^{4l_{k,\tau}(1-\tau)}.
\end{multline*}
We take $\tau\in(0,1)$ such that (recall, $C_0$ depends only on the structure constants of $g$)
\beqs
\varepsilon=\sup_{\lambda\in K}\|\mathbf{r}(\lambda)^w\|_{\mathcal{L}_b(L^2(V;\tilde{V}))}^{\tau/2}(C_0\tilde{C}_1)^{1-\tau}<1
\eeqs
and deduce that $\|(\partial^{\alpha}_{\lambda}\mathbf{r}(\lambda)^{\# m})^w\|^{(k)}_{op(1,g)}\leq C'''m^q(m+1)^{N_{l_k,\tau}}\varepsilon^m$, for $m\geq \max\{2q,4l_{k,\tau}\}$. As $q$ and $\lambda_0$ are arbitrary, we conclude that $\lambda\mapsto 1+\sum_{m=1}^{\infty}\mathbf{r}(\lambda)^{\# m}$, $E\mapsto S(1,g;\mathcal{L}_b(\tilde{V}))$, is of class $\mathcal{C}^N$.\\
\indent Let $\mathbf{f}$ be as in the statement of the theorem; we continue to assume $g$ is symplectic. Fix $\lambda_0\in E$ and let $K$ be a compact neighbourhood of $\lambda_0$ and, if $E$ is a smooth manifold, assume further that $K$ is a regular compact set (i.e. $K=\overline{\mathrm{int}\, K}$) included in a coordinate neighbourhood $U$ of $\lambda_0$. Since $(\mathbf{f}(\lambda)^w)^*\mathbf{f}(\lambda)^w$ is positive invertible on $L^2(V;\tilde{V})$, $\mathbf{f}$ is continuous and $K$ compact, there exists $C>0$ and for each $\lambda\in K$ there exists $0<c_{\lambda}\leq C$ such that
\beqs
c_{\lambda}\|u\|^2_{L^2(V;\tilde{V})}\leq ((\mathbf{f}(\lambda)^w)^*\mathbf{f}(\lambda)^wu,u) \leq C\|u\|^2_{L^2(V;\tilde{V})},\,\,\, \forall u\in L^2(V;\tilde{V}),\, \forall\lambda\in K.
\eeqs
Define $\mathbf{r}(\lambda)=I-C^{-1}\mathbf{f}(\lambda)^*\#\mathbf{f}(\lambda)$ and thus, $\mathbf{r}(\lambda)^w=\mathrm{Id}-C^{-1}(\mathbf{f}(\lambda)^w)^*\mathbf{f}(\lambda)^w$. The mapping $\mathbf{r}:K\rightarrow S(1,g;\mathcal{L}_b(\tilde{V}))$ is continuous and $\|\mathbf{r}(\lambda)^w\|_{\mathcal{L}_b(L^2(V;\tilde{V}))}<1$, $\forall \lambda\in K$. If $E$ is a smooth manifold and $\mathbf{f}$ is of class $\mathcal{C}^N$, then $\mathbf{r}:\mathrm{int}\, K\rightarrow S(1,g;\mathcal{L}_b(\tilde{V}))$ is also of class $\mathcal{C}^N$. As $K$ is compact, we infer $\sup_{\lambda\in K}\|\mathbf{r}(\lambda)^w\|_{\mathcal{L}_b(L^2(V;\tilde{V}))}<1$. The first part now implies that there exists a continuous mapping $\mathbf{R}_K:K\rightarrow S(1,g;\mathcal{L}_b(\tilde{V}))$ such that
\beq\label{htsnvc135}
\mathbf{R}_K(\lambda)\#\mathbf{f}(\lambda)^*\#\mathbf{f}(\lambda)=I=\mathbf{f}(\lambda)^*\#\mathbf{f}(\lambda)\# \mathbf{R}_K(\lambda),\,\,\, \forall \lambda\in K.
\eeq
Similarly, there exists a continuous mapping $\tilde{\mathbf{R}}_K:K\rightarrow S(1,g;\mathcal{L}_b(\tilde{V}))$ such that
\beq\label{vltnke157}
\tilde{\mathbf{R}}_K(\lambda)\#\mathbf{f}(\lambda)\#\mathbf{f}(\lambda)^*=I=\mathbf{f}(\lambda)\# \mathbf{f}(\lambda)^*\# \tilde{\mathbf{R}}_K(\lambda),\,\,\, \forall \lambda\in K.
\eeq
Now, (\ref{htsnvc135}) and (\ref{vltnke157}) imply that $\mathbf{R}_K(\lambda)\#\mathbf{f}(\lambda)^* =\mathbf{f}(\lambda)^*\# \tilde{\mathbf{R}}_K(\lambda)$, $\forall \lambda\in K$. Thus, by defining $\tilde{\mathbf{f}}_K(\lambda)=\mathbf{R}_K(\lambda)\#\mathbf{f}(\lambda)^*$, we deduce that $\tilde{\mathbf{f}}_K:K\rightarrow S(1,g;\mathcal{L}_b(\tilde{V}))$ is continuous and satisfies the conclusion of the theorem on $K$. If $E$ is smooth manifold and $\mathbf{f}$ is of class $\mathcal{C}^N$, the first part verifies that the restriction of $\mathbf{R}_K$ to $\mathrm{int}\, K$ is of class $\mathcal{C}^N$ and thus the restriction of $\tilde{\mathbf{f}}_{K}$ to $\mathrm{int}\, K$ is also of class $\mathcal{C}^N$. Covering $E$ by compact neighbourhoods and noticing that, when $K\cap K'\neq\emptyset$,
\beqs
\tilde{\mathbf{f}}_K(\lambda)= \tilde{\mathbf{f}}_K(\lambda)\#\mathbf{f}(\lambda)\# \tilde{\mathbf{f}}_{K'}(\lambda)=\tilde{\mathbf{f}}_{K'}(\lambda),\,\,\, \forall \lambda\in K\cap K',
\eeqs
we conclude the proof of the theorem when $g$ is symplectic.\\
\indent Assume now that $g$ is a general geodesically temperate H\"ormander metric; $g^{\#}$ is also a H\"ormander metric by \cite[Proposition 2.2.20, p. 78]{lernerB}, $g^{\#}$ is geodesically temperate (cf. \cite[Remark 2.6.21, p. 153]{lernerB}) and every admissible weight for $g$ is admissible for $g^{\#}$ too. Let $E$ be a locally compact Hausdorff topological space. Since $S(1,g;\mathcal{L}_b(\tilde{V}))$ is continuously included into $S(1,g^{\#};\mathcal{L}_b(\tilde{V}))$, the first part of the proof verifies the existence of a continuous map $\tilde{\mathbf{f}}:E\rightarrow S(1,g^{\#};\mathcal{L}_b(\tilde{V}))$ such that (\ref{invconmap}) holds. In fact, \cite[Theorem 2.6.27, p. 158]{lernerB} verifies that $\tilde{\mathbf{f}}(E)\subseteq S(1,g;\mathcal{L}_b(\tilde{V}))$; we only need to prove it is continuous as an $S(1,g;\mathcal{L}_b(\tilde{V}))$-valued mapping. Let $k\in\ZZ_+$. Given $S\in W$ and $T_j\in W$, $j=1,\ldots,k$, satisfying $g_{S}(T_j)=1$, define the function $M_S^{T_{l_1},\ldots,T_{l_m}}(X)=\prod_{j=1}^m g_X(T_{l_j})^{1/2}$, $X\in W$. One easily verifies that $M_S^{T_{l_1},\ldots,T_{l_m}}$, $\{l_1,\ldots,l_m\}\subseteq \{1,\ldots,k\}$, are admissible weights for $g$ and $g^{\#}$ with uniform structure constants for $g$ and $g^{\#}$ (cf. the proof of \cite[Theorem 2.6.27, p. 158]{lernerB}); of course the structure constants depend on $k$. Notice that $\partial_{T_{l_1}}\ldots\partial_{T_{l_m}}\mathbf{f}(\lambda)\in S(M_S^{T_{l_1},\ldots,T_{l_m}},g^{\#};\mathcal{L}_b(\tilde{V}))$, for all $\lambda\in E$. Moreover,
\beq\label{vhnklt159}
\|\partial_{T_{l_1}}\ldots\partial_{T_{l_m}}\mathbf{f}(\lambda)\|^{(q)}_ {S(M_S^{T_{l_1},\ldots,T_{l_m}},g^{\#};\mathcal{L}_b(\tilde{V}))} \leq \|\mathbf{f}(\lambda)\|^{(q+m)}_{S(1,g;\mathcal{L}_b(\tilde{V}))},\,\,\, \mbox{for all}\,\, q\in\NN,\, \lambda\in E.
\eeq
Applying $\partial_{T_1}$ to the identity (\ref{invconmap}), we infer $\partial_{T_1}\tilde{\mathbf{f}}(\lambda)\#\mathbf{f}(\lambda) +\tilde{\mathbf{f}}(\lambda)\#\partial_{T_1}\mathbf{f}(\lambda)=0$ and thus $\partial_{T_1}\tilde{\mathbf{f}}(\lambda)= -\tilde{\mathbf{f}}(\lambda)\#\partial_{T_1}\mathbf{f}(\lambda) \#\tilde{\mathbf{f}}(\lambda)$. By induction, one can verify that $\partial_{T_1}\ldots\partial_{T_k}\tilde{\mathbf{f}}(\lambda)$ is a finite sum of terms of the form
\beq\label{vshrnk157}
\pm f^{(1)}_{\lambda}\#\ldots \#f^{(s)}_{\lambda}
\eeq
where each $f^{(j)}_{\lambda}$ is either $\tilde{\mathbf{f}}(\lambda)$ or $\partial_{T_{l_1}}\ldots\partial_{T_{l_m}}\mathbf{f}(\lambda)$ and $s\leq 2k+1$; furthermore each $\partial_{T_j}$, $j=1\ldots,k$, appears exactly once in (\ref{vshrnk157}). Fix $\lambda_0\in E$ and a compact neighbourhood $K$ of $\lambda_0$. Then $\partial_{T_1}\ldots \partial_{T_k}\tilde{\mathbf{f}}(\lambda)-\partial_{T_1}\ldots \partial_{T_k}\tilde{\mathbf{f}}(\lambda_0)$ is a finite sum of terms of the form
\beq\label{vholrn135}
\pm \left(f^{(1)}_{\lambda}\#\ldots \#f^{(s)}_{\lambda}-f^{(1)}_{\lambda_0}\#\ldots \#f^{(s)}_{\lambda_0}\right)
\eeq
with $f^{(j)}_{\lambda}$ and $f^{(j)}_{\lambda_0}$ as above and $s\leq 2k+1$. The quantity (\ref{vholrn135}) is equal to
\beqs
\pm\sum_{j=1}^sf^{(1)}_{\lambda_0}\#\ldots\# f^{(j-1)}_{\lambda_0}\#(f^{(j)}_{\lambda}-f^{(j)}_{\lambda_0}) \#f^{(j+1)}_{\lambda}\#\ldots\#f^{(s)}_{\lambda}.
\eeqs
We take the norm $\|\cdot\|^{(0)}_{S(M_S^{T_1,\ldots,T_k},g^{\#};\mathcal{L}_b(\tilde{V}))}$ of the above sum. Because of \cite[Theorem 2.3.7, p. 91]{lernerB}, there exists $p\in\ZZ_+$ and $C>0$ independent of $S$ and $T_j$ (since $M_S^{T_{l_1},\ldots, T_{l_m}}$ have uniform structure constants with respect to $g^{\#}$ and $s\leq 2k+1$) such that this norm is dominated by
\beq\label{kncftl157}
C\sum_{j=1}^s \left(\prod_{l=1}^{j-1}\|f^{(l)}_{\lambda_0}\|^{(p)}_{S(\tilde{M}_l,g^{\#};\mathcal{L}_b(\tilde{V}))}\right) \|f^{(j)}_{\lambda}-f^{(j)}_{\lambda_0}\|^{(p)}_{S(\tilde{M}_j,g^{\#};\mathcal{L}_b(\tilde{V}))} \left(\prod_{l=j+1}^s\|f^{(l)}_{\lambda}\|^{(p)}_{S(\tilde{M}_l,g^{\#};\mathcal{L}_b(\tilde{V}))}\right),
\eeq
where $\tilde{M}_j$, $j=1,\ldots,s$, are given as follows: when $f^{(j)}_{\lambda}=\tilde{\mathbf{f}}(\lambda)$ then $\tilde{M}_j(X)=1$, $\forall X\in W$, and when $f^{(j)}_{\lambda}=\partial_{T_{l_1}}\ldots\partial_{T_{l_m}}\mathbf{f}(\lambda)$ then $\tilde{M}_j(X)= M_S^{T_{l_1},\ldots,T_{l_m}}(X)$, $\forall X\in W$. Since $\tilde{\mathbf{f}}$ and $\mathbf{f}$ are continuous with values in $S(1,g^{\#};\mathcal{L}_b(\tilde{V}))$ and $S(1,g;\mathcal{L}_b(\tilde{V}))$ respectively and $K$ is compact, (\ref{kncftl157}) tends to $0$ as $\lambda\rightarrow \lambda_0$ uniformly in $S\in W$, $T_j\in W$, $j=1,\ldots,k$, satisfying $g_S(T_j)=1$ (cf. (\ref{vhnklt159})). Thus
\beqs
\sup_{\substack{S\in W\\ T_1,\ldots,T_k\in W,\, g_S(T_j)=1}}\|\partial_{T_1}\ldots \partial_{T_k}\tilde{\mathbf{f}}(\lambda)(S)-\partial_{T_1}\ldots \partial_{T_k}\tilde{\mathbf{f}}(\lambda_0)(S)\|_{\mathcal{L}_b(\tilde{V})}\rightarrow 0,\,\,\, \mbox{as}\,\, \lambda\rightarrow \lambda_0.
\eeqs
We conclude $\tilde{\mathbf{f}}$ is continuous at $\lambda_0$ as a $S(1,g;\mathcal{L}_b(\tilde{V}))$-valued mapping.\\
\indent Assume that $E$ is a smooth $p$-dimensional manifold with $\mathbf{f}$ being of class $\mathcal{C}^N$, $1\leq N\leq\infty$. Then the first part proves that $\tilde{\mathbf{f}}:E\rightarrow S(1,g^{\#};\mathcal{L}_b(\tilde{V}))$ is of class $\mathcal{C}^N$ and the above also verifies that $\tilde{\mathbf{f}}:E\rightarrow S(1,g;\mathcal{L}_b(\tilde{V}))$ is continuous. For a (local or global) smooth vector field $X$ on $E$, (\ref{invconmap}) implies
\beq\label{vkrtlk159}
X\tilde{\mathbf{f}}(\lambda)=-\tilde{\mathbf{f}}(\lambda)\# X\mathbf{f}(\lambda)\#\tilde{\mathbf{f}}(\lambda),\,\,\, \mbox{as}\,\, S(1,g^{\#};\mathcal{L}_b(\tilde{V}))-\mbox{valued mappings}.
\eeq
To prove $\tilde{\mathbf{f}}$ is of class $\mathcal{C}^N$ as an $S(1,g;\mathcal{L}_b(\tilde{V}))$-valued mapping, let $\lambda_0\in E$ and let $K$ be a regular compact set containing $\lambda_0$ in its interior and $K$ is contained in a coordinate neighbourhood $U$ of $\lambda_0$ with local coordinates $(\lambda^1,\ldots,\lambda^p)$. We infer $\partial_{\lambda^j}\tilde{\mathbf{f}}(\lambda)= -\tilde{\mathbf{f}}(\lambda)\#\partial_{\lambda^j}\mathbf{f}(\lambda)\#\tilde{\mathbf{f}}(\lambda)$ as $S(1,g^{\#};\mathcal{L}_b(\tilde{V}))$-valued maps (cf. (\ref{vkrtlk159})). Hence, the maps $\tilde{\mathbf{f}}_j:U\rightarrow S(1,g;\mathcal{L}_b(\tilde{V}))$, $\tilde{\mathbf{f}}_j(\lambda)= -\tilde{\mathbf{f}}(\lambda)\#\partial_{\lambda^j}\mathbf{f}(\lambda)\#\tilde{\mathbf{f}}(\lambda)$, $j=1,\ldots,p$, are well defined and continuous. We will prove that
\beq\label{nrkvct159}
|\lambda-\lambda_0|^{-1}\left(\tilde{\mathbf{f}}(\lambda)-\tilde{\mathbf{f}}(\lambda_0)- \sum_{j=1}^p(\lambda^j-\lambda^j_0)\tilde{\mathbf{f}}_j(\lambda_0)\right)\rightarrow 0,\,\,\, \mbox{as}\,\, \lambda\rightarrow \lambda_0,\,\,\, \mbox{in}\,\, S(1,g;\mathcal{L}_b(\tilde{V})).
\eeq
Let $k\in\ZZ_+$ be arbitrary but fixed and let $T_1,\ldots,T_k,S\in W$ be such that $g_S(T_j)=1$, $j=1,\ldots,k$. We keep the same notations as above. Notice that $\partial_{T_{l_1}}\ldots\partial_{T_{l_m}}\partial_{\lambda^j}\mathbf{f}(\lambda)\in S(M_S^{T_{l_1},\ldots,T_{l_m}},g^{\#};\mathcal{L}_b(\tilde{V}))$, for all $\lambda\in E$, $j=1,\ldots,p$, and
\beq\label{vncrte157}
\|\partial_{T_{l_1}}\ldots\partial_{T_{l_m}}\partial_{\lambda^j}\mathbf{f}(\lambda)\|^{(q)}_ {S(M_S^{T_{l_1},\ldots,T_{l_m}},g^{\#};\mathcal{L}_b(\tilde{V}))} \leq \|\partial_{\lambda^j}\mathbf{f}(\lambda)\|^{(q+m)}_{S(1,g;\mathcal{L}_b(\tilde{V}))},
\eeq
for all $q\in\NN$, $\lambda\in U$, $j=1,\ldots,p$. Because of (\ref{vshrnk157}) and the fact that $\partial_{T_j}$ commute with $\partial_{\lambda^q}$, we deduce that $\partial_{T_1}\ldots\partial_{T_k}\tilde{\mathbf{f}}_q(\lambda)$ is a finite sum of terms of the form
\beqs
\pm\partial_{\lambda^q}(f^{(1)}_{\lambda}\#\ldots \#f^{(s)}_{\lambda})=\pm\sum_{j=1}^sf^{(1)}_{\lambda}\#\ldots\#\partial_{\lambda^q}f^{(j)}_{\lambda}\#\ldots \#f^{(s)}_{\lambda},
\eeqs
and $s\leq 2k+1$. Notice that $\partial_{T_1}\ldots\partial_{T_k}\tilde{\mathbf{f}}(\lambda)-\partial_{T_1}\ldots\partial_{T_k}\tilde{\mathbf{f}}(\lambda_0)$ is a finite sum of terms of the form (\ref{vholrn135}) which in turn is equal to
\begin{align}
\pm&\sum_{j=1}^sf^{(1)}_{\lambda_0}\#\ldots\# f^{(j-1)}_{\lambda_0}\#\left(f^{(j)}_{\lambda}-f^{(j)}_{\lambda_0}-\sum_{q=1}^p (\lambda^q-\lambda^q_0) \partial_{\lambda^q}f^{(j)}_{\lambda_0}\right)\# f^{(j+1)}_{\lambda}\#\ldots\#f^{(s)}_{\lambda}\label{vstnkc135}\\
\pm& \sum_{q=1}^p (\lambda^q-\lambda^q_0)\sum_{j=1}^s\sum_{t=j+1}^sf^{(1)}_{\lambda_0}\#\ldots\# f^{(j-1)}_{\lambda_0}\# \partial_{\lambda^q}f^{(j)}_{\lambda_0}\# f^{(j+1)}_{\lambda_0}\#\ldots\# f^{(t-1)}_{\lambda_0}\nonumber\\
&\quad\quad\# (f^{(t)}_{\lambda}-f^{(t)}_{\lambda_0})\#f^{(t+1)}_{\lambda}\#\ldots\#f^{(s)}_{\lambda}\label{vsklpr157}\\
\pm& \sum_{q=1}^p (\lambda^q-\lambda^q_0)\partial_{\lambda^q}(f^{(1)}_{\lambda}\#\ldots \#f^{(s)}_{\lambda})\big|_{\lambda=\lambda_0}.\nonumber
\end{align}
We deduce that the derivatives $\partial_{T_1}\ldots\partial_{T_k}$ of the term in brackets in (\ref{nrkvct159}) is a finite sum of terms of the form (\ref{vstnkc135}) and (\ref{vsklpr157}). We take the norm $\|\cdot\|^{(0)}_{S(M_S^{T_1,\ldots,T_k},g^{\#};\mathcal{L}_b(\tilde{V}))}$ of (\ref{vstnkc135}) and (\ref{vsklpr157}) and, similarly as above, we conclude that it is dominated by
\begin{align*}
&\sum_{j=1}^s\left(\prod_{l=1}^{j-1}\|f^{(l)}_{\lambda_0}\|^{(p)}_{S(\tilde{M}_l,g^{\#};\mathcal{L}_b(\tilde{V}))}\right) \left\|f^{(j)}_{\lambda}-f^{(j)}_{\lambda_0}-\sum_{q=1}^p (\lambda^q-\lambda^q_0) \partial_{\lambda^q}f^{(j)}_{\lambda_0}\right\|^{(p)}_{S(\tilde{M}_j,g^{\#};\mathcal{L}_b(\tilde{V}))}\\
&\quad\quad\cdot \left(\prod_{l=j+1}^s\|f^{(l)}_{\lambda}\|^{(p)}_{S(\tilde{M}_l,g^{\#};\mathcal{L}_b(\tilde{V}))}\right)\\
& + |\lambda-\lambda_0|\sum_{q=1}^p\sum_{j=1}^s\sum_{t=j+1}^s \left(\prod_{l=1}^{j-1}\|f^{(l)}_{\lambda_0}\|^{(p)}_{S(\tilde{M}_l,g^{\#};\mathcal{L}_b(\tilde{V}))}\right) \|\partial_{\lambda^q}f^{(j)}_{\lambda_0}\|^{(p)}_{S(\tilde{M}_j,g^{\#};\mathcal{L}_b(\tilde{V}))}\\
&\quad\quad\cdot \left(\prod_{l=j+1}^{t-1}\|f^{(l)}_{\lambda_0}\|^{(p)}_{S(\tilde{M}_l,g^{\#};\mathcal{L}_b(\tilde{V}))}\right) \|f^{(t)}_{\lambda}-f^{(t)}_{\lambda_0}\|^{(p)}_{S(\tilde{M}_t,g^{\#};\mathcal{L}_b(\tilde{V}))} \left(\prod_{l=t+1}^s\|f^{(l)}_{\lambda}\|^{(p)}_{S(\tilde{M}_l,g^{\#};\mathcal{L}_b(\tilde{V}))}\right).
\end{align*}
Thus, the seminorm $\|\cdot\|^{(k)}_{S(1,g;\mathcal{L}_b(\tilde{V}))}$ of (\ref{nrkvct159}) tends to $0$ as $\lambda\rightarrow \lambda_0$ (cf. (\ref{vhnklt159}) and (\ref{vncrte157}); additionally each $s$ is at most $2k+1$). Since $k$ and $\lambda_0\in \mathrm{int}\, K$ are arbitrary, we conclude that $\tilde{\mathbf{f}}$ is $\mathcal{C}^1$ as $S(1,g;\mathcal{L}_b(\tilde{V}))$-valued mapping whose partial derivatives are $\tilde{\mathbf{f}}_j$ (recall, these are continuous $S(1,g;\mathcal{L}_b(\tilde{V}))$-valued mappings). In the same way one proves that $\tilde{\mathbf{f}}$ is $\mathcal{C}^k$, for every $k\in\ZZ_+$, $k\leq N$, i.e. it is of class $\mathcal{C}^N$ on $\mathrm{int}\, K$, and, as $K$ is arbitrary, it is of class $\mathcal{C}^N$ on $E$ as $S(1,g;\mathcal{L}_b(\tilde{V}))$-valued mapping.
\end{proof}


\section{Fredholmness and ellipticity}\label{vnklst135}

The goal of this section is to investigate the relationship between the property of a pseudodifferential operator to restrict to a Fredholm operator between appropriate Sobolev spaces and the notion of ellipticity. In fact, we will prove that these are the same provided the metric is geodesically temperate and its associate function $\lambda_g$ tends to infinity at infinity.\\
\indent We start with the following simple but useful result.

\begin{lemma}\label{compaoper}
Let $g$ be a H\"ormander metric and $M_1$, $M_2$ and $M$ $g$-admissible weights. If $MM_2/M_1$ vanishes at infinity then for any $a\in S(M,g;\mathcal{L}_b(\tilde{V}))$, $a^w$ restricts to a compact operator from $H(M_1,g;\tilde{V})$ into $H(M_2,g;\tilde{V})$.
\end{lemma}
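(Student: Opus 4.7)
The strategy is to reduce the compactness assertion to a single Rellich-type question on $L^2(V;\tilde{V})$ by conjugating the Sobolev spaces back to $L^2$ via invertible Weyl quantisations, and then to handle the $L^2$-version by approximation with Schwartz symbols. Invoking the standard H\"ormander theory, pick $A\in S(1/M_1,g)$ and $B\in S(M_2,g)$ whose Weyl quantisations realise isomorphisms $A^w:L^2(V;\tilde{V})\to H(M_1,g;\tilde{V})$ and $B^w:H(M_2,g;\tilde{V})\to L^2(V;\tilde{V})$; their existence follows from standard elliptic constructions applied to smoothed representatives of the corresponding admissible weights. Then $a^w:H(M_1,g;\tilde{V})\to H(M_2,g;\tilde{V})$ is compact if and only if $B^w\circ a^w\circ A^w=(B\#a\#A)^w:L^2(V;\tilde{V})\to L^2(V;\tilde{V})$ is compact. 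Since the composite symbol lies in $S(MM_2/M_1,g;\mathcal{L}_b(\tilde{V}))$ and $MM_2/M_1\to 0$ at infinity by hypothesis, the lemma reduces to the following key claim: \emph{if $m$ is a $g$-admissible weight with $m\to 0$ at infinity and $c\in S(m,g;\mathcal{L}_b(\tilde{V}))$, then $c^w:L^2(V;\tilde{V})\to L^2(V;\tilde{V})$ is compact.}

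To prove the key claim, I approximate $c$ in the $S(1,g;\mathcal{L}_b(\tilde{V}))$-topology by Schwartz symbols. Fix a smooth $\tilde{m}\in S(m,g)$ with $\tilde{m}\asymp m$ (available from the smoothing of weights recorded in the preliminaries). Choose $\psi\in\mathcal{C}^\infty(\RR)$ with $\psi=0$ on $(-\infty,1/2]$ and $\psi=1$ on $[1,\infty)$, and set $\chi_R(X):=\psi(R\tilde{m}(X))$ for $R\geq 1$. A chain-rule computation shows that $\chi_R\in S(1,g)$ with seminorms bounded independently of $R$: on $\mathrm{supp}\,\psi^{(l)}(R\tilde{m})$ one has $R\tilde{m}\leq 1$, so the factors of $R$ appearing in $R^l\psi^{(l)}(R\tilde{m})\cdot\prod_j\partial\tilde{m}$ are absorbed by the factors $\tilde{m}$ coming from $\partial^\alpha\tilde{m}\in S(\tilde{m},g)$. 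Moreover $\chi_R$ has compact support in $W$ because $\tilde{m}\to 0$.

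Setting $c_R:=\chi_R c$, we get $c_R\in\mathcal{C}_c^\infty(W;\mathcal{L}_b(\tilde{V}))\subseteq\SSS(W;\mathcal{L}_b(\tilde{V}))$, so $c_R^w$ has Schwartz integral kernel and is Hilbert--Schmidt — \emph{a fortiori} compact — on $L^2(V;\tilde{V})$. Since $1-\chi_R$ is supported in $\{\tilde{m}\leq 1/R\}\subseteq\{m\leq C/R\}$, Leibniz together with the uniform-in-$R$ bound on $\|1-\chi_R\|^{(k)}_{S(1,g)}$ gives $\|c-c_R\|^{(k)}_{S(1,g;\mathcal{L}_b(\tilde{V}))}\leq C'_k/R$ for every $k\in\NN$. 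The Calder\'on--Vaillancourt theorem in the Weyl-H\"ormander calculus then yields $\|c^w-c_R^w\|_{\mathcal{L}_b(L^2(V;\tilde{V}))}\leq C\|c-c_R\|^{(k_0)}_{S(1,g;\mathcal{L}_b(\tilde{V}))}\to 0$, so $c^w$ is the norm limit of compact operators and is therefore itself compact.

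The main obstacle is arranging that $\chi_R$ have $S(1,g)$-seminorms uniform in $R$: the naive rescaling $\chi(\cdot/R)$ is incompatible with the varying metric $g$, whereas the composition $\psi(R\tilde{m})$ succeeds precisely because on the transition region $\{1/(2R)\leq\tilde{m}\leq 1/R\}$ one has $R\tilde{m}\asymp 1$, which pins each extra factor $R$ appearing in a derivative against a factor $\tilde{m}$ supplied by $\partial^\alpha\tilde{m}\in S(\tilde{m},g)$.
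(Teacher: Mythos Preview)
Your proof is correct and follows the same route as the paper: reduce to $L^2$ by conjugating with invertible Weyl quantisations of order $1/M_1$ and $M_2$, and then invoke compactness on $L^2$ for a symbol whose weight vanishes at infinity. The only difference is that the paper simply cites H\"ormander's Theorem~5.5 for this last step, whereas you supply a self-contained approximation argument via the cutoffs $\psi(R\tilde m)$; your construction is a standard and valid way to recover that result.
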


\begin{proof} By \cite[Corollary 2.6.16, p. 150]{lernerB}, we can choose $a_j\in S(M_j,g)$, $\tilde{a}_j\in S(1/M_j,g)$ satisfying $a_j\#\tilde{a}_j=1=\tilde{a}_j\#a_j$, $j=1,2$. Then $a^w=(\tilde{a}_2I)^w((a_2I)\#a\#(\tilde{a}_1I))^w(a_1I)^w$. Since $(a_2I)\#a\#(\tilde{a}_1I)\in S(MM_2/M_1,g;\mathcal{L}_b(\tilde{V}))$ and $MM_2/M_1$ vanishes at infinity, \cite[Theorem 5.5]{hormander} yields that $((a_2I)\#a\#(\tilde{a}_1I))^w$ is compact on $L^2(V;\tilde{V})$ and the result of the lemma follows.
\end{proof}

The definition of ellipticity is as follows.

\begin{definition}
Let $g$ be a H\"ormander metric and $M$ $g$-admissible weight. We say that $a\in S(M,g;\mathcal{L}_b(\tilde{V}))$ is $S(M,g;\mathcal{L}_b(\tilde{V}))$-elliptic if there exist a compact neighbourhood of the origin $K\subseteq W$ and $C>0$ such that $|\det a(X)|\geq CM(X)^{\dim \tilde{V}}$, for all $X\in W\backslash K$.
\end{definition}

\begin{remark}
Of course, in the scalar valued case, this definition reduces to the familiar one when working in the frequently used calculi (the Shubin calculus, the SG calculus, etc.; cf. \cite{NR,Shubin}); see also \cite{BN} for the notion of hypoellipticity in the scalar-valued setting of the Weyl-H\"ormander calculus.
\end{remark}

\begin{remark}
For $a\in S(M,g;\mathcal{L}_b(\tilde{V}))$, we always have $\det a\in S(M^{\dim\tilde{V}},g)$. Thus, for a given $a\in S(M,g;\mathcal{L}_b(\tilde{V}))$, the $S(M,g;\mathcal{L}_b(\tilde{V}))$-ellipticity of $a$ is equivalent to the $S(M^{\dim\tilde{V}},g)$-ellipticity of $\det a$.
\end{remark}

\begin{remark}\label{vcnlpb135}
There exists $c'_0\geq 1$ which depends only on $\dim\tilde{V}$ and $\|\cdot\|_{\tilde{V}}$ such that for any invertible $A:\tilde{V}\rightarrow\tilde{V}$ we have $1/\|A\|_{\mathcal{L}_b(\tilde{V})}\leq \|A^{-1}\|_{\mathcal{L}_b(\tilde{V})}\leq c'_0\|A\|^{\dim\tilde{V}-1}_{\mathcal{L}_b(\tilde{V})}/|\det A|$. Consequently, for $a\in S(M,g;\mathcal{L}_b(\tilde{V}))$ the $S(M,g;\mathcal{L}_b(\tilde{V}))$-ellipticity of $a$ is equivalent to the following: there exist a compact neighbourhood of the origin $K\subseteq W$ and $C>0$ such that $a(X)$ is invertible on $W\backslash K$ and $\|a(X)^{-1}\|_{\mathcal{L}_b(\tilde{V})}\leq C/M(X)$, $\forall X\in W\backslash K$.
\end{remark}

\begin{theorem}\label{tlckpe157}
Let $g$ be a H\"ormander metric satisfying $\lambda_g\rightarrow \infty$ and $M$ a $g$-admissible weight. If $a\in S(M,g;\mathcal{L}_b(\tilde{V}))$ is elliptic than for any $g$-admissible weight $M_1$, $a^w$ restricts to a Fredholm operator from $H(M_1,g;\tilde{V})$ into $H(M_1/M,g;\tilde{V})$ and its index is independent of $M_1$.
\end{theorem}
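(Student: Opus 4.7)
The plan is to construct a two-sided parametrix for $a^w$ modulo operators whose symbols lie in $S(\lambda_g^{-1},g;\mathcal{L}_b(\tilde{V}))$, use Lemma \ref{compaoper} to promote the remainders to compact operators on the relevant Sobolev spaces, deduce Fredholmness by Atkinson's theorem, and finally show via elliptic regularity that both $\ker a^w$ and the cokernel are subspaces of $\mathcal{S}(V;\tilde{V})$ independent of $M_1$.

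I would construct the parametrix as follows. By Remark \ref{vcnlpb135}, fix a compact neighborhood $K$ of the origin on whose complement $a(X)$ is invertible with $\|a(X)^{-1}\|_{\mathcal{L}_b(\tilde{V})}\leq C/M(X)$, and pick $\chi\in\mathcal{C}_c^{\infty}(W)$ equal to $1$ near $K$. Set
\[ b(X)=(1-\chi(X))\,a(X)^{-1}, \]
extended by $0$ on $K$. Iterating $\partial(a^{-1})=-a^{-1}(\partial a)a^{-1}$ together with the symbol bounds on $a$ one checks $b\in S(1/M,g;\mathcal{L}_b(\tilde{V}))$. Pointwise $ab=(1-\chi)I$, so $ab-I=-\chi I$ lies in $S(\lambda_g^{-N},g;\mathcal{L}_b(\tilde{V}))$ for every $N$ (compact support of $\chi$ and $\lambda_g\geq 1$). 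Combined with the standard expansion $a\#b-ab\in S(\lambda_g^{-1},g;\mathcal{L}_b(\tilde{V}))$ and similarly for $b\#a$, this yields
\[ a\#b-I,\ \ b\#a-I\ \in\ S(\lambda_g^{-1},g;\mathcal{L}_b(\tilde{V})). \]
Writing $A=a^w\colon H(M_1,g;\tilde{V})\to H(M_1/M,g;\tilde{V})$ and $B=b^w\colon H(M_1/M,g;\tilde{V})\to H(M_1,g;\tilde{V})$, Lemma \ref{compaoper} combined with $\lambda_g\to\infty$ makes $AB-I$ and $BA-I$ compact on the respective spaces, so $A$ is Fredholm by Atkinson's theorem.

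For the index independence, if $u\in H(M_1,g;\tilde{V})$ satisfies $a^wu=0$, then $u=-(b\#a-I)^wu$; since $(b\#a-I)^w$ maps $H(M',g;\tilde{V})$ continuously into $H(M'\lambda_g,g;\tilde{V})$ for every $g$-admissible $M'$, iterating gives $u\in\bigcap_{k}H(M_1\lambda_g^k,g;\tilde{V})=\mathcal{S}(V;\tilde{V})$, the latter equality holding precisely because $\lambda_g\to\infty$. Thus $\ker A=\{u\in\mathcal{S}(V;\tilde{V})\colon a^wu=0\}$ is independent of $M_1$. Identifying the cokernel via the $L^2$-pairing with $\ker((a^*)^w)$, where $(a^*)^w\colon H(M/M_1,g;\tilde{V})\to H(1/M_1,g;\tilde{V})$ and $a^*$ is the pointwise matrix adjoint (still elliptic with the same weight $M$), the same regularization argument shows this kernel equals $\{v\in\mathcal{S}(V;\tilde{V})\colon(a^*)^wv=0\}$, again independent of $M_1$. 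Hence $\operatorname{ind}A$ depends only on $a$.

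The main technical point I expect to grind through is the iterative regularization: verifying that applying $(b\#a-I)^w$ really does trade one power of $\lambda_g$ for regularity at each step, and that $\bigcap_k H(M_1\lambda_g^k,g;\tilde{V})=\mathcal{S}(V;\tilde{V})$ under $\lambda_g\to\infty$. Both are intrinsic to the Weyl-H\"ormander calculus, but they are the heart of the index-independence claim; by comparison, the parametrix construction and the Fredholm conclusion become routine once Lemma \ref{compaoper} is invoked.
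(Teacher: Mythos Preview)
Your parametrix construction and the deduction of Fredholmness are correct and essentially identical to the paper's argument: the paper also takes $\tilde a=a^{-1}$ off a compact set, observes $\tilde a\#a-I,\ a\#\tilde a-I\in S(1/\lambda_g,g;\mathcal{L}_b(\tilde V))$, and invokes Lemma~\ref{compaoper}.

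The index-independence argument, however, has a real gap under the stated hypothesis. Your bootstrap correctly gives $u\in\bigcap_k H(M_1\lambda_g^{k},g;\tilde V)$, but the asserted equality
\[
\bigcap_{k} H(M_1\lambda_g^{k},g;\tilde V)=\mathcal{S}(V;\tilde V)
\]
is \emph{not} a consequence of $\lambda_g\to\infty$ alone. That identification requires the family $\{M_1\lambda_g^{k}\}$ to be cofinal among $g$-admissible weights, which is guaranteed by the strong uncertainty principle $\lambda_g(X)\geq C(1+g_0(X))^{\delta}$ but can fail when $\lambda_g$ diverges arbitrarily slowly. The paper makes exactly this point in the remark following the theorem: with the strong uncertainty principle one can indeed conclude that $\dim\ker$ and $\dim\operatorname{coker}$ are separately independent of $M_1$, whereas under merely $\lambda_g\to\infty$ ``we can not say anything about the invariance of the dimensions of the kernel and cokernel.'' So your route proves strictly more than the theorem claims, and for that reason needs a strictly stronger hypothesis than the theorem assumes.

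The paper sidesteps this by a conjugation argument that only compares \emph{indices}. Given two admissible weights $M_1,M_2$, it picks $b_1\in S(M_1/M_2,g)$, $b_2\in S(M_2/M_1,g)$ with $b_1\#b_2=1=b_2\#b_1$, so that $(b_jI)^w$ are mutually inverse isomorphisms between the relevant Sobolev spaces. Since $(b_2I)\#a\#(b_1I)-a\in S(M/\lambda_g,g;\mathcal{L}_b(\tilde V))$, Lemma~\ref{compaoper} makes $B_2A_2B_1-A_1$ compact, whence $\operatorname{ind}A_1=\operatorname{ind}B_2A_2B_1=\operatorname{ind}A_2$. This uses only compactness of the remainder (hence only $\lambda_g\to\infty$) and the stability of the index under compact perturbations, never touching $\mathcal{S}$-regularity of kernel elements.
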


\begin{proof} Let $\tilde{a}=a^{-1}$ away from the origin and modified on a sufficiently large compact neighbourhood of the origin so as to be a well defined element of $S(1/M,g;\mathcal{L}_b(\tilde{V}))$. Then $\tilde{a}\#a-I\in S(1/\lambda_g,g;\mathcal{L}_b(\tilde{V}))$ and Lemma \ref{compaoper} verifies that $\tilde{a}^wa^w-\mathrm{Id}$ is compact operator on $H(M_1,g;\tilde{V})$. Similarly, $a^w\tilde{a}^w-\mathrm{Id}$ is compact operator on $H(M_1/M,g;\tilde{V})$. Consequently, $a^w:H(M_1,g;\tilde{V})\rightarrow H(M_1/M,g;\tilde{V})$ is Fredholm. To prove that the index is independent of $M_1$, let $M_2$ be another $g$-admissible weight and denote by $A_j$ the restriction of $a^w$ to $H(M_j,g;\tilde{V})\rightarrow H(M_j/M,g;\tilde{V})$, $j=1,2$. Because of \cite[Corollary 2.6.16, p. 150]{lernerB} we can choose $b_1\in S(M_1/M_2,g)$ and $b_2\in S(M_2/M_1,g)$ such that $b_1\#b_2=1=b_2\#b_1$. Consequently, the restrictions $B_1$ and $B_2$ of $(b_1I)^w$ and $(b_2I)^w$ to $H(M_1,g;\tilde{V})\rightarrow H(M_2,g;\tilde{V})$ and $H(M_2/M,g;\tilde{V})\rightarrow H(M_1/M,g;\tilde{V})$ respectively, are isomorphisms. Since $(b_2I)\#a\#(b_1I)-a\in S(M/\lambda_g,g;\mathcal{L}_b(\tilde{V}))$ and $\lambda_g\rightarrow \infty$ at infinity, Lemma \ref{compaoper} implies that $B_2A_2B_1-A_1: H(M_1,g;\tilde{V})\rightarrow H(M_1/M,g;\tilde{V})$ is compact. Consequently, $\operatorname{ind} A_2=\operatorname{ind} B_2A_2B_1=\operatorname{ind} A_1$.
\end{proof}

\begin{remark}
If there exists $C,\delta>0$ such that $\lambda_g(X)\geq C(1+g_0(X))^{\delta}$, $\forall X\in W$, (i.e. if the metric satisfies the strong uncertainty principle) then given an elliptic $a\in S(M,g)$ one can construct a parametrix of $a$ (see \cite{schrohe1,NR1}; see also \cite{CancellerChemin,Helfer}) and derive from that the the index of $a^w_{|H(M_1,g)}:H(M_1,g)\rightarrow H(M_1/M,g)$ does not depend on $M_1$; in fact one can derive the stronger result that the dimensions of the kernel and cokernel are independent of $M_1$ (cf. \cite[Section 1.6]{NR}). The significance of the above result is that the index is independent of $M_1$ even when only requiring $\lambda_g\rightarrow \infty$; however we can not say anything about the invariance of the dimensions of the kernel and cokernel.
\end{remark}

Our next goal is to prove a converse result to that of Theorem \ref{tlckpe157}; namely, if $a^w$ restrict to a Fredholm operator between Sobolev spaces than it is elliptic. The proof relies on Theorem \ref{invconsmoot} and, consequently, on the spectral invariance of the Weyl-H\"ormander calculus which, in turn, relies on the geodesic temperance of $g$. We first prove this result for symbols in $S(1,g;\mathcal{L}_b(\tilde{V}))$ and derive the general case from the latter.\\
\indent Before we proceed, we need the the following result whose proof is the same as for \cite[Lemma 2.7]{schrohe} and we omit it.

\begin{lemma}\label{lemmaforprope}
Let $g$ be a H\"ormander metric and $a\in S(1,g;\mathcal{L}_b(\tilde{V}))$ is such that $A=a^w{}_{|L^2(V;\tilde{V})}$ has finite dimensional range. Then there exist $\varphi_j\in\SSS(V;\tilde{V}')$, $\psi_j\in\SSS(V;\tilde{V})$, $j=1,\ldots,m$, such that $Af=\sum_{j=1}^m\langle f,\varphi_j\rangle\psi_j$, $f\in L^2(V;\tilde{V})$. Consequently, the kernel of $A$ is in $\SSS(V;\tilde{V}')\otimes \SSS(V;\tilde{V})$ and thus $a\in \SSS(W;\mathcal{L}_b(\tilde{V}))$.
\end{lemma}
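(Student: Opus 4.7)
The plan is to combine three ingredients: first, upgrade the range of $A$ from $L^2(V;\tilde{V})$ to $\SSS(V;\tilde{V})$ using density and the finite-dimensionality of the range; second, apply the same trick to the transpose $A^t$ to place the coefficient functionals in $\SSS(V;\tilde{V}')$; and third, pass from a rank-one Schwartz kernel decomposition back to the Weyl symbol through the standard kernel-symbol correspondence.

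For the first step, recall that because $a\in S(1,g;\mathcal{L}_b(\tilde{V}))$, the operator $a^w$ acts continuously on $\SSS(V;\tilde{V})$ and on $L^2(V;\tilde{V})=H(1,g;\tilde{V})$. Denoting the finite-dimensional range of $A$ by $R$, we have
\beqs
A(\SSS(V;\tilde{V}))\subseteq R\cap \SSS(V;\tilde{V}),
\eeqs
and $A(\SSS(V;\tilde{V}))$ is a linear subspace of the finite-dimensional space $R$, hence closed in $L^2(V;\tilde{V})$. On the other hand, density of $\SSS(V;\tilde{V})$ in $L^2(V;\tilde{V})$ together with the $L^2$-continuity of $A$ gives $R=A(L^2(V;\tilde{V}))\subseteq \overline{A(\SSS(V;\tilde{V}))}=A(\SSS(V;\tilde{V}))$. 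Therefore $R\subseteq\SSS(V;\tilde{V})$, and one may select a basis $\psi_1,\dots,\psi_m\in\SSS(V;\tilde{V})$ of $R$.

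With the $\psi_j$ linearly independent, the general theory of finite-rank operators on a Hilbert space yields $\varphi_1,\dots,\varphi_m\in L^2(V;\tilde{V}')$ such that $Af=\sum_{j=1}^m\langle f,\varphi_j\rangle\psi_j$; the transpose $A^t$ on $L^2(V;\tilde{V}')$ then acts by $A^t g=\sum_j \langle\psi_j,g\rangle\varphi_j$, so $\mathrm{Range}(A^t)=\mathrm{span}\{\varphi_1,\dots,\varphi_m\}$. But $A^t=(a^t)^w$ where $a^t(X)\in\mathcal{L}_b(\tilde{V}')$ is the pointwise transpose, and $a^t\in S(1,g;\mathcal{L}_b(\tilde{V}'))$, so $A^t$ preserves $\SSS(V;\tilde{V}')$. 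Applying the argument of the previous paragraph to $A^t$ in place of $A$ gives $\varphi_j\in\SSS(V;\tilde{V}')$ for $j=1,\dots,m$.

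Finally, the Schwartz kernel of $A$ is $K(x,y)=\sum_{j=1}^m \psi_j(x)\otimes\varphi_j(y)$, an element of $\SSS(V\times V;\mathcal{L}_b(\tilde{V}))$. The Weyl symbol is recovered via $a(x,\xi)=\int_V K(x+z/2,x-z/2)\,e^{-i\langle z,\xi\rangle}\,dz$, which is the composition of the linear change of variables $(x,y)\mapsto (x+(x-y)/2, (x-y))$ (a Schwartz-preserving diffeomorphism on $V\times V$) with a partial Fourier transform in one variable; both operations preserve the Schwartz class, whence $a\in\SSS(W;\mathcal{L}_b(\tilde{V}))$. The only nontrivial step is the first one, where one must carefully use that a linear subspace of a finite-dimensional space is automatically closed in the ambient $L^2$; once that observation is in place, the rest is bookkeeping in the vector-valued Weyl calculus.
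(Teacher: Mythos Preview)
Your argument is essentially correct and is the standard one; the paper itself omits the proof entirely, referring the reader to \cite[Lemma 2.7]{schrohe}, whose argument your proof follows faithfully.

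One small slip worth correcting: the Weyl symbol of the transpose is not the pointwise transpose $a^t$ but rather $b(x,\xi)=a(x,-\xi)^t$ (swap the roles of $x$ and $y$ in the kernel formula and substitute $\xi\mapsto-\xi$). This does not affect your conclusion, since $b$ still lies in $S(1,\tilde g;\mathcal{L}_b(\tilde V'))$ for the reflected metric $\tilde g_{(x,\xi)}=g_{(x,-\xi)}$, which is again a H\"ormander metric, and hence $A^t$ still preserves $\SSS(V;\tilde V')$. A cleaner route is to use the $L^2$-adjoint instead of the transpose: for Weyl quantisation one has exactly $(a^w)^*=(a^*)^w$ with $a^*(X)$ the pointwise Hermitian adjoint, so $a^*\in S(1,g;\mathcal{L}_b(\tilde V))$ and the first-step argument applies verbatim to $A^*$, yielding representatives of the coefficient functionals in $\SSS(V;\tilde V)$, which one then identifies with elements of $\SSS(V;\tilde V')$ via the fixed inner product on $\tilde V$.
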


\begin{theorem}\label{fredelliptforoneordsym}
Let $g$ be a geodesically temperate H\"ormander metric satisfying $\lambda_g\rightarrow \infty$. If $a\in S(1,g;\mathcal{L}_b(\tilde{V}))$ is such that $a^w$ restricts to a Fredholm operator on $L^2(V)$ then $a$ is elliptic.
\end{theorem}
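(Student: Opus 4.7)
The plan is to reduce to the self-adjoint case, use Theorem \ref{invconsmoot} to construct a smoothing perturbation that makes the operator invertible on $L^2$, and then convert the symbolic inverse into pointwise invertibility of $a(X)$ at infinity. First, set $c=a\#a^*\in S(1,g;\mathcal{L}_b(\tilde V))$, so that $c^w=a^w(a^w)^*$ is self-adjoint and Fredholm on $L^2(V;\tilde V)$. Since $c^w$ is self-adjoint and Fredholm, either $0\notin \mathrm{spec}(c^w)$, or $0$ is isolated in the spectrum as an eigenvalue of finite multiplicity; in the former case set $p_0=0$, and in the latter fix a small contour $\Gamma\subset \mathbb{C}$ enclosing only the eigenvalue $0$.

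Next I would build a symbol for the Riesz spectral projection onto $\ker c^w$. The map $\lambda\mapsto \lambda I-c$ is $\mathcal{C}^{\infty}$ with values in $S(1,g;\mathcal{L}_b(\tilde V))$ on a neighbourhood $E$ of $\Gamma$ in $\mathbb{C}$, and $(\lambda I-c)^w=\lambda I-c^w$ is invertible on $L^2(V;\tilde V)$ for every $\lambda\in E$. Theorem \ref{invconsmoot} then produces a continuous $\#$-inverse $E\ni\lambda\mapsto b_\lambda\in S(1,g;\mathcal{L}_b(\tilde V))$, so the Bochner integral
\[
p_0=\frac{1}{2\pi i}\oint_\Gamma b_\lambda\,d\lambda
\]
is a well-defined element of $S(1,g;\mathcal{L}_b(\tilde V))$. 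Since the Weyl quantisation is continuous and commutes with Bochner integration, $p_0^w=\frac{1}{2\pi i}\oint_\Gamma(\lambda I-c^w)^{-1}d\lambda$, which is exactly the finite-rank orthogonal projection $P_0$ onto $\ker c^w$. Lemma \ref{lemmaforprope} then forces $p_0\in \SSS(W;\mathcal{L}_b(\tilde V))$; in particular $\|p_0(X)\|_{\mathcal{L}_b(\tilde V)}\to 0$ rapidly as $|X|\to\infty$.

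Set $\tilde c=c+p_0\in S(1,g;\mathcal{L}_b(\tilde V))$. The orthogonal decomposition $L^2(V;\tilde V)=\ker c^w\oplus(\ker c^w)^\perp$ shows that $\tilde c^w$ is invertible on $L^2(V;\tilde V)$, so by spectral invariance there exists $d\in S(1,g;\mathcal{L}_b(\tilde V))$ with $d\#\tilde c=I$. The main analytic step is to promote this algebraic identity to a pointwise bound on $\tilde c(X)^{-1}$. Expanding the Moyal product one step, $d\#\tilde c-d\cdot\tilde c$ lies in a symbol class of the form $S(\lambda_g^{-\alpha},g;\mathcal{L}_b(\tilde V))$ for a positive $\alpha$; since $\lambda_g\to\infty$, the remainder vanishes in $\mathcal{L}_b(\tilde V)$-norm at infinity, so for $|X|$ large $d(X)\tilde c(X)=I-r(X)$ with $\|r(X)\|_{\mathcal{L}_b(\tilde V)}\leq 1/2$. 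In the finite-dimensional target this forces $\tilde c(X)$ to be invertible with $\|\tilde c(X)^{-1}\|_{\mathcal{L}_b(\tilde V)}\leq 2\|d(X)\|_{\mathcal{L}_b(\tilde V)}\leq C$, uniformly in $X$.

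The rapid decay of $p_0$ then yields invertibility of $c(X)=\tilde c(X)-p_0(X)$ with uniformly bounded inverse outside a large ball, and a second application of the Moyal expansion writes $c(X)=a(X)a(X)^*+r_1(X)$ with $r_1$ vanishing at infinity. Hence $a(X)a(X)^*$ is invertible with bounded inverse for $|X|$ large, which forces $a(X)$ to be invertible with $a(X)^{-1}=a(X)^*(a(X)a(X)^*)^{-1}$ uniformly bounded; by Remark \ref{vcnlpb135} this is precisely the $S(1,g;\mathcal{L}_b(\tilde V))$-ellipticity of $a$. The main obstacle is producing $p_0$ within the class $S(1,g;\mathcal{L}_b(\tilde V))$ (rather than in some larger calculus), which is exactly where Theorem \ref{invconsmoot} is indispensable; the hypothesis $\lambda_g\to\infty$ then enters crucially in the bridging step from the Moyal identity $d\#\tilde c=I$ to the pointwise invertibility of $\tilde c(X)$.
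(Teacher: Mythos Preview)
Your proposal is correct and follows essentially the same route as the paper: reduce to the self-adjoint operator (you use $a\#a^*$, the paper uses $a^*\#a$), build the Riesz projection symbol via Theorem~\ref{invconsmoot}, upgrade it to $\SSS$ by Lemma~\ref{lemmaforprope}, add it back to obtain an $L^2$-invertible operator, and then use spectral invariance together with the first-order Moyal expansion and $\lambda_g\to\infty$ to pass to pointwise invertibility. The only cosmetic differences are that the paper verifies $p_0^w=P_0$ by an explicit Riemann-sum discretisation rather than invoking Bochner integration, and it arrives at ellipticity in one step by writing $d\#(p_0+a^*\#a)=I$ and noting that $d\#p_0\in\SSS$ and $d\#a^*\#a-da^*a\in S(1/\lambda_g,g;\mathcal{L}_b(\tilde V))$, whereas you peel off $p_0$ and the Moyal remainder in separate passes.
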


\begin{proof} Throughout the proof, we fix an inner product on $\tilde{V}$ and denote by $\|\cdot\|_{\tilde{V}}$ and $\|\cdot\|_{\mathcal{L}_b(\tilde{V})}$ the induced norms. Denote $A=a^w{}_{|L^2(V;\tilde{V})}$. As $A$ is Fredholm, $0$ is an isolated point of the spectrum of the positive operator $A^*A$ (see \cite[Lemma 7.2]{cordes}). Let $\Gamma$ be a circle about the origin in $\CC$ with radius $r\leq 1$ which contains no other point of the spectrum of $A^*A$ except possibly $0$ and define
\beqs
B=\frac{1}{2\pi i}\int_{\Gamma} (\lambda\mathrm{Id}-A^*A)^{-1}d\lambda.
\eeqs
Then $B$ is an orthogonal projection and \cite[Section 5.10, Theorems 10.2 and 10.1, p. 330]{tal} imply that the range of $B$ is $\operatorname{ker} A^*A=\operatorname{ker} A$; i.e. $B$ is an orthogonal projection onto $\operatorname{ker} A$ (this trivially holds if $\operatorname{ker} A=\{0\}$). Let $\tilde{a}_{\lambda}=\lambda I-a^*\#a\in S(1,g;\mathcal{L}_b(\tilde{V}))$, $\lambda\in \Gamma$. The mapping $\lambda\mapsto \tilde{a}_{\lambda}$, $\Gamma\rightarrow S(1,g;\mathcal{L}_b(\tilde{V}))$, is continuous (and in fact smooth) and $\tilde{a}_{\lambda}^w$ is invertible on $L^2(V;\tilde{V})$. Theorem \ref{invconsmoot} yields the existence of continuous (and in fact smooth) mapping $\lambda\mapsto \tilde{b}_{\lambda}$, $\Gamma\rightarrow S(1,g;\mathcal{L}_b(\tilde{V}))$, such that $\tilde{b}_{\lambda}\# \tilde{a}_{\lambda}=I=\tilde{a}_{\lambda}\# \tilde{b}_{\lambda}$, $\lambda\in \Gamma$. Define
\beqs
b(X)=\frac{1}{2\pi i}\int_{\Gamma}\tilde{b}_{\lambda}(X)d\lambda= \frac{r}{2\pi}\int_0^{2\pi} \tilde{b}_{re^{it}}(X) e^{it}dt,\,\,\,\, X\in W.
\eeqs
Clearly $b\in\mathcal{C}^{\infty}(W;\mathcal{L}_b(\tilde{V}))$ and, since $\lambda\mapsto\tilde{b}_{\lambda}$ is continuous and $\Gamma$ is compact, one easily derives that $b\in S(1,g;\mathcal{L}_b(\tilde{V}))$. For each $m\in\ZZ_+$, define $\tilde{c}_{m,t}=\tilde{b}_{re^{2\pi i j/m}}e^{2\pi i j/m}$, when $2\pi(j-1)/m\leq t< 2\pi j/m$, $j=1,\ldots, m$; clearly $c_{m,t}\in S(1,g;\mathcal{L}_b(\tilde{V}))$. Furthermore,
\beqs
b_m=\frac{r}{2\pi}\int_0^{2\pi} c_{m,t}dt=\frac{r}{2\pi}\sum_{j=1}^m\tilde{b}_{re^{2\pi i j/m}}e^{2\pi i j/m}\cdot \frac{2\pi}{m}\in S(1,g;\mathcal{L}_b(\tilde{V})).
\eeqs
Now
\beqs
\|b^w-b_m^w\|_{\mathcal{L}_b(L^2(V;\tilde{V}))}&\leq& C\|b-b_m\|^{(k)}_{S(1,g;\mathcal{L}_b(\tilde{V}))}\\
&\leq& \frac{C}{2\pi}\sum_{j=1}^m\int_{2\pi (j-1)/m}^{2\pi j/m}\|\tilde{b}_{re^{it}}e^{it}-\tilde{b}_{re^{2\pi i j/m}}e^{2\pi i j/m}\|^{(k)}_{S(1,g;\mathcal{L}_b(\tilde{V}))}dt.
\eeqs
The right hand side tends to $0$ as $m\rightarrow \infty$ since $t\mapsto \tilde{b}_{re^{it}}e^{it}$, $[0,2\pi]\rightarrow S(1,g;\mathcal{L}_b(\tilde{V}))$, is uniformly continuous. Consequently $b_m^w\rightarrow b^w$ in $\mathcal{L}_b(L^2(V;\tilde{V}))$. On the other hand, $c_{m,t}^w\rightarrow \tilde{b}_{re^{it}}^we^{it}$, as $m\rightarrow \infty$, pointwise in $\mathcal{L}_b(L^2(V;\tilde{V}))$, so dominated convergence implies $b_m^w\rightarrow B$ in $\mathcal{L}_b(L^2(V;\tilde{V}))$. We conclude $b^w{}_{|L^2(V;\tilde{V})}=B$. Since the range of $B$ is the finite dimensional space $\mathrm{ker}\, A$, we can apply Lemma \ref{lemmaforprope} to deduce $b\in \SSS(W;\mathcal{L}_b(\tilde{V}))$. One easily verifies that $B+A^*A$ is invertible on $L^2(V;\tilde{V})$ and consequently, there exists $c\in S(1,g;\mathcal{L}_b(\tilde{V}))$ such that $c^w{}_{|L^2(V;\tilde{V})}= (B+A^*A)^{-1}$. We infer $c\#(b+a^*\#a)=I$ which yields $c\#a^*\#a=I-c\#b$. Since $c\#b\in \SSS(W;\mathcal{L}_b(\tilde{V}))$ and $c\#a^*\#a-ca^*a\in S(1/\lambda_g,g;\mathcal{L}_b(\tilde{V}))$, we deduce $ca^*a-I\in S(1/\lambda_g,g;\mathcal{L}_b(\tilde{V}))$. As $c\in S(1,g;\mathcal{L}_b(\tilde{V}))$ and $1/\lambda_g$ vanishes at infinity, we conclude the validity of the theorem.\footnote{there exists $\varepsilon>0$ which depends only on $\dim \tilde{V}$ and $\|\cdot\|_{\tilde{V}}$ such that for all $P\in\mathcal{L}(\tilde{V})$ satisfying $\|P\|_{\mathcal{L}_b(\tilde{V})}\leq \varepsilon$ it holds $|\det(I+P)|\geq 1/2$}
\end{proof}

The main result of the section is the following.

\begin{theorem}
Let $g$ be a geodesically temperate H\"ormander metric satisfying $\lambda_g\rightarrow \infty$ and $M$ and $M_1$ two $g$-admissible weights. If $a\in S(M,g;\mathcal{L}_b(\tilde{V}))$ is such that $a^w$ restricts to a Fredholm operator from $H(M_1,g;\tilde{V})$ into $H(M_1/M,g;\tilde{V})$ then $a$ is elliptic.
\end{theorem}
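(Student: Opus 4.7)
The natural strategy is to reduce the statement to the case already proven in Theorem \ref{fredelliptforoneordsym}, where the symbol lives in $S(1,g;\mathcal{L}_b(\tilde{V}))$ and acts on $L^2(V;\tilde{V})$. By \cite[Corollary 2.6.16, p. 150]{lernerB}, I would select scalar order-reducing symbols $m_1\in S(M_1,g)$, $\tilde m_1\in S(1/M_1,g)$ and $m_2\in S(M_1/M,g)$, $\tilde m_2\in S(M/M_1,g)$ satisfying $m_j\#\tilde m_j=\tilde m_j\# m_j=1$. Their Weyl quantisations give isomorphisms $(\tilde m_1I)^w:L^2(V;\tilde{V})\to H(M_1,g;\tilde{V})$ and $(m_2I)^w:H(M_1/M,g;\tilde{V})\to L^2(V;\tilde{V})$, so the symbol
\[
b:=m_2\#a\#\tilde m_1\in S(1,g;\mathcal{L}_b(\tilde{V}))
\]
has the property that $b^w=(m_2I)^w\circ a^w\circ(\tilde m_1I)^w:L^2(V;\tilde{V})\to L^2(V;\tilde{V})$ is Fredholm. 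Theorem \ref{fredelliptforoneordsym} (whose proof extends verbatim to the vector-valued case) then furnishes a compact $K_0\subseteq W$ and $C_0>0$ such that $b(X)$ is invertible with $\|b(X)^{-1}\|_{\mathcal{L}_b(\tilde{V})}\leq C_0$ on $W\setminus K_0$ (cf. Remark \ref{vcnlpb135}).

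The next step is to transport this pointwise invertibility back to $a$. A double application of the Moyal asymptotic expansion yields
\[
b-m_2\,a\,\tilde m_1\in S(1/\lambda_g,g;\mathcal{L}_b(\tilde{V})),
\]
where $m_2\,a\,\tilde m_1$ is the pointwise product. Since $\lambda_g\to\infty$, this remainder has operator norm below $1/(2C_0)$ outside a large compact $K_1\supseteq K_0$, and a Neumann series argument shows that $m_2(X)a(X)\tilde m_1(X)=m_2(X)\tilde m_1(X)\cdot a(X)$ is invertible there with $\|(m_2a\tilde m_1)(X)^{-1}\|_{\mathcal{L}_b(\tilde{V})}\leq 2C_0$. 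Applying the same Moyal principle to $m_j\#\tilde m_j=1$ gives $m_j(X)\tilde m_j(X)=1+O(1/\lambda_g(X))$; combined with the upper bounds $|m_j(X)|\leq C M_j(X)$ and $|\tilde m_j(X)|\leq C/M_j(X)$ from the symbol classes, one obtains $|m_j(X)|\asymp M_j(X)$ and $|\tilde m_j(X)|\asymp 1/M_j(X)$ outside a compact set, hence $|m_2(X)\tilde m_1(X)|\asymp 1/M(X)$ and in particular $m_2(X)\tilde m_1(X)\neq 0$ there. Therefore $a(X)$ is invertible on $W\setminus K_1$ with
\[
a(X)^{-1}=m_2(X)\tilde m_1(X)\cdot\bigl(m_2(X)a(X)\tilde m_1(X)\bigr)^{-1},
\]
giving $\|a(X)^{-1}\|_{\mathcal{L}_b(\tilde{V})}\leq C'/M(X)$, which by Remark \ref{vcnlpb135} is exactly $S(M,g;\mathcal{L}_b(\tilde{V}))$-ellipticity of $a$.

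The main obstacle is not any single estimate but the careful bookkeeping of three successive reductions: conjugating $a^w$ down to an operator on $L^2(V;\tilde{V})$, replacing the Moyal product $b=m_2\#a\#\tilde m_1$ by the pointwise product $m_2\,a\,\tilde m_1$, and then extracting $a(X)^{-1}$ from the scalar-weighted product $m_2(X)\tilde m_1(X)\,a(X)$. The hypothesis $\lambda_g\to\infty$ enters critically to make each Moyal remainder uniformly small outside a compact set, and geodesic temperance feeds in through Theorem \ref{fredelliptforoneordsym}, which itself rests on the spectral invariance of the calculus and on the regularity-preservation result of Theorem \ref{invconsmoot}.
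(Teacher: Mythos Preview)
Your proposal is correct and follows essentially the same route as the paper's proof: conjugate $a^w$ by scalar order-reducing operators to land in $S(1,g;\mathcal{L}_b(\tilde V))$ acting on $L^2$, invoke Theorem \ref{fredelliptforoneordsym}, and then use that the Moyal product and pointwise product differ by an $S(1/\lambda_g,g)$ remainder to recover the pointwise ellipticity of $a$. The only cosmetic differences are that the paper chooses merely \emph{elliptic} scalar symbols $b\in S(1/M_1,g)$, $c\in S(M_1/M,g)$ (using Theorem \ref{tlckpe157} to see their quantisations are Fredholm, hence the composite is Fredholm), whereas you take exact $\#$-invertible order reducers from \cite[Corollary 2.6.16]{lernerB} so the outer factors are isomorphisms; and the paper phrases the final step via determinants while you use the equivalent operator-norm formulation from Remark \ref{vcnlpb135}.
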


\begin{proof} Take elliptic $b\in S(1/M_1,g)$ and elliptic $c\in S(M_1/M,g)$. Then $\tilde{a}=(cI)\#a\#(bI)\in S(1,g;\mathcal{L}_b(\tilde{V}))$ and $\tilde{a}^w=(cI)^wa^w(bI)^w$ is Fredholm operator on $L^2(V;\tilde{V})$ (cf. Theorem \ref{tlckpe157}). By Theorem \ref{fredelliptforoneordsym}, $|\det \tilde{a}(X)|\geq 1/C$ and $\|\tilde{a}(X)^{-1}\|_{\mathcal{L}_b(\tilde{V})}\leq C$ for all $X$ outside of a compact neighbourhood of the origin $K\subseteq W$ (cf. Remark \ref{vcnlpb135}). Denote $f=\tilde{a}-(cI)a(bI)\in S(1/\lambda_g,g;\mathcal{L}_b(\tilde{V}))$ and notice that
\beqs
|\det (cI)a(bI)(X)|=|\det \tilde{a}(X)||\det(I-\tilde{a}(X)^{-1}f(X))|,\,\, \forall X\in W\backslash K.
\eeqs
As $1/\lambda_g$ vanishes at infinity the claim in the theorem follows.
\end{proof}

\end{document}